\theoremstyle{definition}
\newtheorem{example}{Example}
\newtheorem{definition}{Definition}
\theoremstyle{plain}
\newtheorem{theorem}{Theorem}
\newtheorem{proposition}{Proposition}
\newmdtheoremenv{theo}{Theorem}
\title{\LARGE Sufficient conditions for instability of the subgradient method with constant step size}
\begin{document}

\author{\large C\'edric Josz\thanks{\url{cj2638@columbia.edu}, IEOR, Columbia University, New York. Research supported by NSF EPCN grant 2023032 and ONR grant N00014-21-1-2282.} \and Lexiao Lai\thanks{\url{ll3352@columbia.edu}, IEOR, Columbia University, New York.}}
\date{}

\maketitle

\begin{center}
    \textbf{Abstract}
    \end{center}
    \vspace*{-3mm}
 \begin{adjustwidth}{0.2in}{0.2in}
~~~~We provide sufficient conditions for instability of the subgradient method with constant step size around a local minimum of a locally Lipschitz semi-algebraic function. They are satisfied by several spurious local minima arising in robust principal component analysis and neural networks.
\end{adjustwidth} 
\vspace*{3mm}
\noindent{\bf Keywords:} Lyapunov stability, metric subregularity, subgradient method, Verdier condition

\section{Introduction}
\label{sec:Introduction}
The subgradient method with constant step size for minimizing a locally Lipschitz function $f:\mathbb{R}^n\rightarrow\mathbb{R}$ consists in choosing an initial point $x_0 \in \mathbb{R}^n$ and generating a sequence of iterates according to the update rule $x_{k+1} \in x_k - \alpha \partial f(x_k)$, for all $k \in \mathbb{N} := \{0,1,2,\hdots\}$, where $\alpha>0$ is the step size and $\partial f$ is the Clarke subdifferential \cite[Chapter 2]{clarke1990}. A notion of discrete Lyapunov stability \cite{josz2023lyapunov} was recently proposed to study the behavior of the subgradient method with constant size around a local minimum of a locally Lipschitz semi-algebraic function. Informally, a point is stable if all of the iterates of the subgradient method remain in any neighborhood of it, provided that the initial point is close enough to it and that the step size is small enough. 

It was shown that for a point to be stable, it is necessary for it to be a local minimum \cite[Theorem 1]{josz2023lyapunov} and it suffices for it to be a strict local minimum \cite[Theorem 2]{josz2023lyapunov}. If the function is additionally differentiable with a locally Lipschitz gradient, then it suffices to be a local minimum \cite[Proposition 3.3]{absil2005convergence}. In this note, we show that the existence of a Chetaev function \cite{chetaev1961stability} in a neighborhood of a non-strict local minimum satisfying certain geometric properties guarantees instability. Chetaev functions are similar to Lyapunov functions, except that they increase along the dynamics rather than decrease. We check that the geometric properties, which involve higher-order metric subregularity \cite{mordukhovich2015higher,zheng2015holder} and the Verdier condition \cite{verdier1976stratifications}, hold in several applications of interest and exhibit corresponding Chetaev functions. 

The Verdier condition was recently introduced to the field of optimization by Bianchi \textit{et al.} \cite{bianchi2022stochastic} and Davis \textit{et al.} \cite{davis2023active}. Those works extend to the nonsmooth setting the pioneering work by Pemantle \cite{pemantle1990nonconvergence} on the nonconvergence to strict saddle points of the perturbed gradient method with diminishing step size. Precisely, they consider the update rule $x_{k+1} \in x_k - \alpha_k (\partial f(x_k) + \epsilon_k)$ for all $k\in \mathbb{N}$, where there exist $0<c_1<c_2$ and $\gamma \in (1/2,1]$ such that $c_1/k^\gamma \leqslant \alpha_k \leqslant c_2/k^\gamma$ for all $k\in \mathbb{N}^* := \{1,2,3,\hdots\}$. Also, the random variable $\epsilon_k$ is drawn uniformly from a ball of radius $r>0$ centered at the origin. They prove nonconvergence to active strict saddles \cite[Definition 2.3]{davis2023active} satisfying the Verdier condition and an angle/proximal aiming condition \cite[Theorem 3]{bianchi2022stochastic} \cite[Theorem 6.2]{davis2023active}.

As shown by Lee \textit{et al.} \cite[Theorem 4]{lee2016} (see also \cite{panageas2017}), in the smooth setting and with constant step size, adding random noise is actually not necessary to prevent convergence to strict saddle points almost surely. More recently, it was observed \cite[Figure 3]{kleinberg2018alternative} that the gradient method with constant step size can escape spurious local minima after adding uniform random noise. A similar observation on the benefits of noise was made in \cite{keskar2016large} when training neural networks: large batch sizes tend to converge to sharp local minima \cite[Metric 2.1]{keskar2016large}, while small batch sizes tend to converge to flat local minima. Our work shows that critical points can be inherently unstable due to the local geometry of the objective function, without adding any noise.

\section{Sufficient conditions for instability}

Let $\|\cdot\|$ be the induced norm of an inner product $\langle \cdot, \cdot\rangle$ on $\mathbb{R}^n$. Let $B(a,r)$ and $\mathring{B}(a,r)$ respectively denote the closed ball and the open ball of center $a\in \mathbb{R}^n$ and radius $r>0$. We first recall the notion of discrete Lyupanov stability \cite[Definition 1]{josz2023lyapunov}. 

\begin{definition}
\label{def:discrete_lyapunov}
We say that $x^*\in \mathbb{R}^n$ is a stable point of a locally Lipschitz function $f:\mathbb{R}^n\rightarrow\mathbb{R}$ if for all $\epsilon>0$, there exist $\delta>0$ and $\bar{\alpha}>0$ such that for all $\alpha \in (0,\bar{\alpha}]$, the subgradient method with constant step size $\alpha$ initialized in $B(x^*,\delta)$ has all its iterates in $B(x^*,\epsilon)$.
\end{definition}

According to the above definition, a point $x^* \in \mathbb{R}^n$ is unstable if there exists $\epsilon>0$ such that for all $\delta>0$ and $\bar{\alpha}>0$, there exists $\alpha \in (0,\bar{\alpha}]$ and an initial point $x_0\in B(x^*,\delta)$ such that at least one of the iterates of the subgradient method with constant step size $\alpha$ does not belong to $B(x^*,\epsilon)$. The sufficient conditions proposed in this note actually imply instability in a stronger sense.

\begin{definition}
\label{def:strong_unstable}
We say that $x^*\in \mathbb{R}^n$ is a strongly unstable point of a locally Lipschitz function $f:\mathbb{R}^n\rightarrow\mathbb{R}$ if there exists $\epsilon>0$ such that for all but finitely many constant step sizes $\alpha>0$ and for almost every initial point in $B(x^*,\epsilon)$, at least one of the iterates of the subgradient method does not belong to $B(x^*,\epsilon)$.
\end{definition}

Recall that a point $x^* \in \mathbb{R}^n$ is a local minimum (respectively strict local minimum) of a function $f:\mathbb{R}^n\rightarrow\mathbb{R}$ if there exists a positive constant $\epsilon$ such that $f(x^*) \leqslant f(x)$ for all $x \in B(x^*,\epsilon)\setminus \{x^*\}$ (respectively $f(x^*) < f(x)$). A local minimum $x^* \in \mathbb{R}^n$ is spurious if $f(x^*)>\inf\{f(x):x\in \mathbb{R}^n\}$. In order to describe the nature of the set of critical points around a non-strict local minimum, we recall the definition of a smooth manifold.

\begin{definition}
\label{def:manifold}
A subset $S$ of $\mathbb{R}^n$ is a $C^p$ manifold with positive $p\in \mathbb{N}$ of dimension $m \in \mathbb{N}$ at $x \in S$ if there exists a Euclidean space $E$ of dimension $n-m$ such that there exists an open neighborhood $U$ of $x$ in $\mathbb{R}^n$ and a $p$ times continuously differentiable function $\varphi: U \rightarrow E$ such that $S\cap U = \varphi^{-1}(0)$ and whose Jacobian is surjective.
\end{definition}

We will use the following notions related to a $C^p$ manifold $S$ at point $x$. According to \cite[Example 6.8]{rockafellar2009variational}, the tangent cone $T_S(x)$ \cite[6.1 Definition]{rockafellar2009variational} and the normal cone $N_S(x)$ \cite[6.3 Definition]{rockafellar2009variational} at a point $x$ in $S$ are respectively the kernel of $\varphi'(x)$ and the range of $\varphi'(x)^*$ where $\varphi'(x)$ is the Jacobian of the function $\varphi$ in Definition \ref{def:manifold} at $x$ and $\varphi'(x)^*$ is its adjoint. 

In order to describe the variation of the objective function around a non-strict local minimum, we borrow the notion of metric $\theta$-subregularity of a set-valued mapping \cite{mordukhovich2015higher,zheng2015holder}. It is a generalization of metric subregularity \cite[Equation (4)]{van2015metric} \cite[Definition 2.3]{artacho2008characterization} \cite[Definition 3.1]{dontchev2004regularity} that has been used to study the Mordukhovich subdifferential \cite[Theorem 3.4]{mordukhovich2015higher}. Given $x\in \mathbb{R}^n$ and $S \subset \mathbb{R}^n$, let $d(x,S) := \inf \{\|x - y\| : y \in S\}$ and $P_S(x):= \mathrm{argmin} \{y \in S:\|x - y\|\}$. Also, given a set-valued mapping $F:\mathbb{R}^n\rightrightarrows\mathbb{R}^m$, let $\mathrm{graph}~F := \{ (x,y) \in \mathbb{R}^n \times \mathbb{R}^m : F(x) \ni y \}$.

\begin{definition}
\label{def:submetric}\cite[Definition 3.1 (i)]{mordukhovich2015higher} A mapping $F:\mathbb{R}^n\rightrightarrows\mathbb{R}^m$ is metrically $\theta$-subregular at $(\bar{x},\bar{y}) \in \mathrm{graph}~F$ with $\theta\in \mathbb{R}$ if there exist $c>0$ and a neighborhood $U$ of $\bar{x}$ such that $d(x,F^{-1}(\bar{y})) \leqslant c d(\bar{y},F(x))^\theta$ for all $x \in U$.
\end{definition}

We introduce two final definitions in order to further describe the variation of the objective function around a non-strict local minimum.
\begin{definition}
\label{def:riemannian gradient}
\cite[Definition 3.30]{boumal2023introduction}
Let $f:\mathbb{R}^n\rightarrow \mathbb{R}$ be a locally Lipschitz function and $S\subset \mathbb{R}^n$ be a $C^1$ manifold at $x$. We say that $f$ is $C^1$ on $S$ at $x$ if there exists a neighborhood $U$ of $x$ and a continuously differentiable function $\bar{f}:U \rightarrow \mathbb{R}$ such that $f(y) = \bar{f}(y)$ for all $y \in S\cap U$.
\end{definition}

According to \cite[Definition 3.58, Proposition 3.61]{boumal2023introduction}, the Riemannian gradient $\nabla_S f(x)$ of $f$ on $S$ at $x$ is given by $\nabla_S f(x):= P_{T_S(x)}(\nabla \bar{f}(x))$.

\begin{definition}
\label{def:verdier}
\cite[Definition 5 iii)]{bianchi2022stochastic} Let $f:\mathbb{R}^n\rightarrow \mathbb{R}$ be a locally Lipschitz function and let $S\subset \mathbb{R}^n$ be a $C^1$ manifold at a point $x^* \in \mathbb{R}^n$. Assume that $f$ is $C^1$ on $S$ at $x^*$. We say that $f$ satisfies the Verdier condition at $x^*$ along $S$ if there exist a neighborhood $U$ of $x^*$ and $c>0$ such that for all $y \in S\cap U$, $x \in U \setminus S$ and $v \in \partial f(x)$, we have $\left\|P_{T_S (y)}(v)-\nabla_{S} f(y)\right\| \leqslant c\|x-y\|$.
\end{definition}

The Verdier condition \cite[Equation (1.4)]{verdier1976stratifications} was introduced in 1976 to study the relationship between submanifolds arising in the Whitney stratification \cite{whitney1965tangents}. It was later shown that a finite family of definable sets always admits a Verdier stratification \cite[1.3 Theorem]{le1998verdier}, that is, for which the Verdier condition holds at every point on each stratum. Bianchi \textit{et al.} \cite{bianchi2022stochastic} and Davis \textit{et al.} \cite{davis2023active} recently used this condition to guarantee that a perturbed subgradient method on tilted functions with diminishing step size does not converge to active saddle points almost surely.

In the context of optimization, the Verdier condition poses a Lipschitz-like condition on the projection of the subgradients and the Riemannian gradient of the objective function along a $C^1$ manifold. Such a condition is reasonable since the domain of a continuous semi-algebraic function always admits a Verdier stratification such that the function satisfies the Verdier condition at every point along each stratum \cite[Theorem 1]{bianchi2022stochastic} \cite[Theorem 3.29]{davis2023active}. However, the manifold induced by the critical points around a non-strict local minimum may not be contained in any strata, in which case the Verdier condition need not hold. It is for this reason that the Verdier condition appears as an assumption in Theorem \ref{thm:suff_unstable} below. We illustrate the Verdier condition with the following two examples, where $\|\cdot\|$ is induced by the Euclidean inner product. They are illustrated in Figures \ref{fig:verdier} and \ref{fig:not_verdier} respectively.
\begin{example}
    Let $f:\mathbb{R}^2\rightarrow \mathbb{R}$ be the function defined by $f(x_1,x_2) := |x_1 x_2 - 1|$. It satisfies the Verdier condition at $x^*:= (1,1)$ along its set of critical points $S := \{(x_1,x_2) \in \mathbb{R}^2 :x_1 x_2 = 1\} \cup \{(0,0)\}$. Consider the neighborhood $U:= B(x^*,0.5)$ of $x^*$. For all $(y_1,y_2) \in S\cup U$, we have that $T_S(y_1,y_2) = \{(x_1,x_2)\in \mathbb{R}^2:y_2 x_1 + y_1 x_2 = 0\}$ and $\nabla_S f(y_1,y_2) = (0,0)$. For all $(x_1,x_2) \in U\setminus S$, we have that $\partial f(x_1,x_2) = \{(\mathrm{sign}(x_1x_2 - 1)x_2, \mathrm{sign}(x_1x_2 - 1)x_1)\}$, where $\mathrm{sign}(t) = 1$ if $t>0$ and $\mathrm{sign}(t) = -1$ if $t<0$. Thus, for all $(y_1,y_2) \in S\cap U$, $(x_1,x_2) \in U \setminus S$ and $v \in \partial f(x_1,x_2)$, we have
 \begin{align*}
     \|P_{T_S(y_1,y_2)}(v) - \nabla_S f(y_1,y_2)\| &=\frac{|x_1 y_2 - x_2 y_1|}{\sqrt{y_1^2 + y_2^2}}\\
     &= \frac{|(x_1 - y_1)y_2 - (x_2 - y_2) y_1|}{\sqrt{y_1^2 + y_2^2}} \\
     &\leqslant \sqrt{(x_1 - y_1)^2 + (x_2 - y_2)^2}\\
     &= \|(x_1,x_2) - (y_1,y_2)\|
 \end{align*}
by the Cauchy-Schwarz inequality.
\end{example}
\begin{example}
Let $f:\mathbb{R}^2\rightarrow \mathbb{R}$ be the function defined by $f(x_1,x_2) := \max\{- x_1^2+2x_2 , |x_2|\}$, which is slight modification of \cite[Example 3.1]{davis2023active}. It does not satisfy the Verdier condition at $x^*:= (0,0)$ along its set of critical points $S := \mathbb{R}\times \{0\}$. Indeed, consider the sequences $y^k := (1/k,0) \in S$, $x^k := (1/k,1/k^2) \notin S$ and $v^k:= (-2/k,2)$ defined for all $k \in \mathbb{N}^*$. They satisfy $y^k \rightarrow x^*$, $x^k \rightarrow x^*$, and $v^k \in \partial f(x^k)$, yet
\begin{equation*}
    \frac{\|P_{T_S(y^k)}(v^k) - \nabla_S f(y^k)\|}{\|x^k - y^k\|} = \frac{\|(-2/k,0) - (0,0)\|}{\|(1/k,1/k^2) - (1/k,0)\|} = \frac{2/k}{1/k^2} \rightarrow \infty.
\end{equation*}
\end{example}

\begin{figure}[ht]
\centering
\begin{subfigure}{.49\textwidth}
  \centering
  \includegraphics[width=1\textwidth]{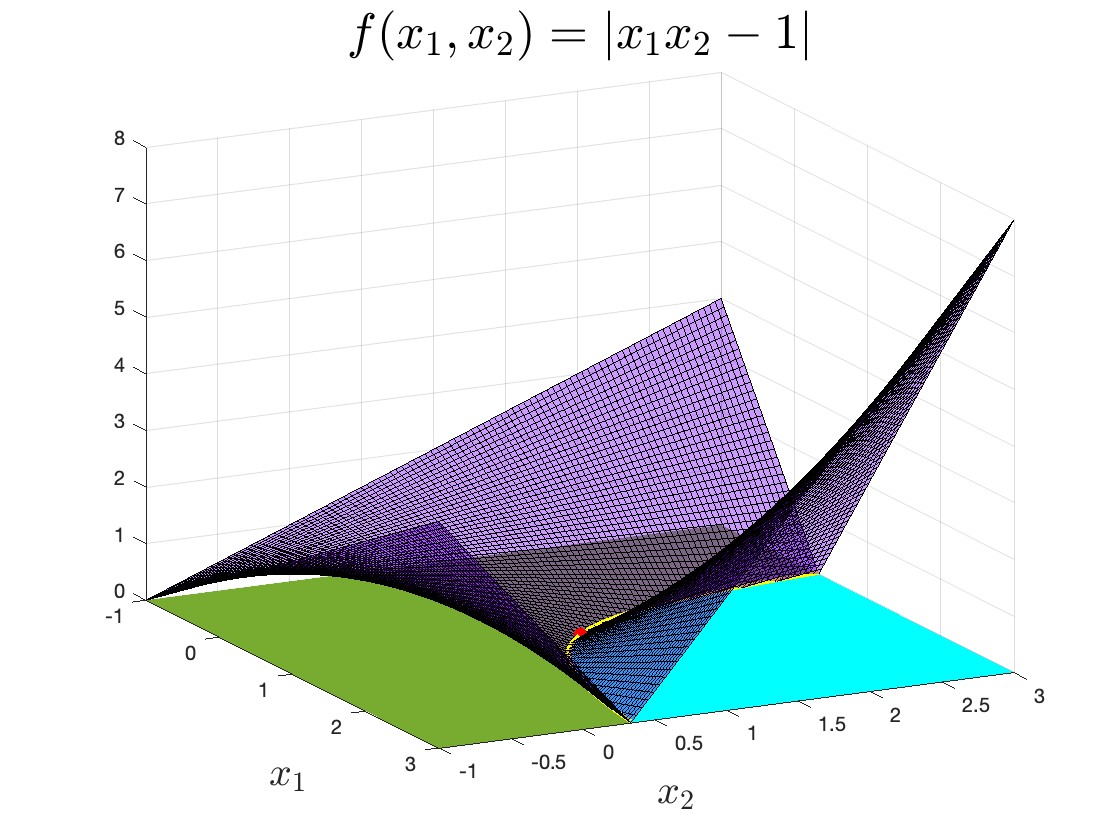}
  \caption{Verdier condition verified at $(1,1)$ along manifold of critical points.}
  \label{fig:verdier}
\end{subfigure}
\begin{subfigure}{.49\textwidth}
  \centering
  \includegraphics[width=1\textwidth]{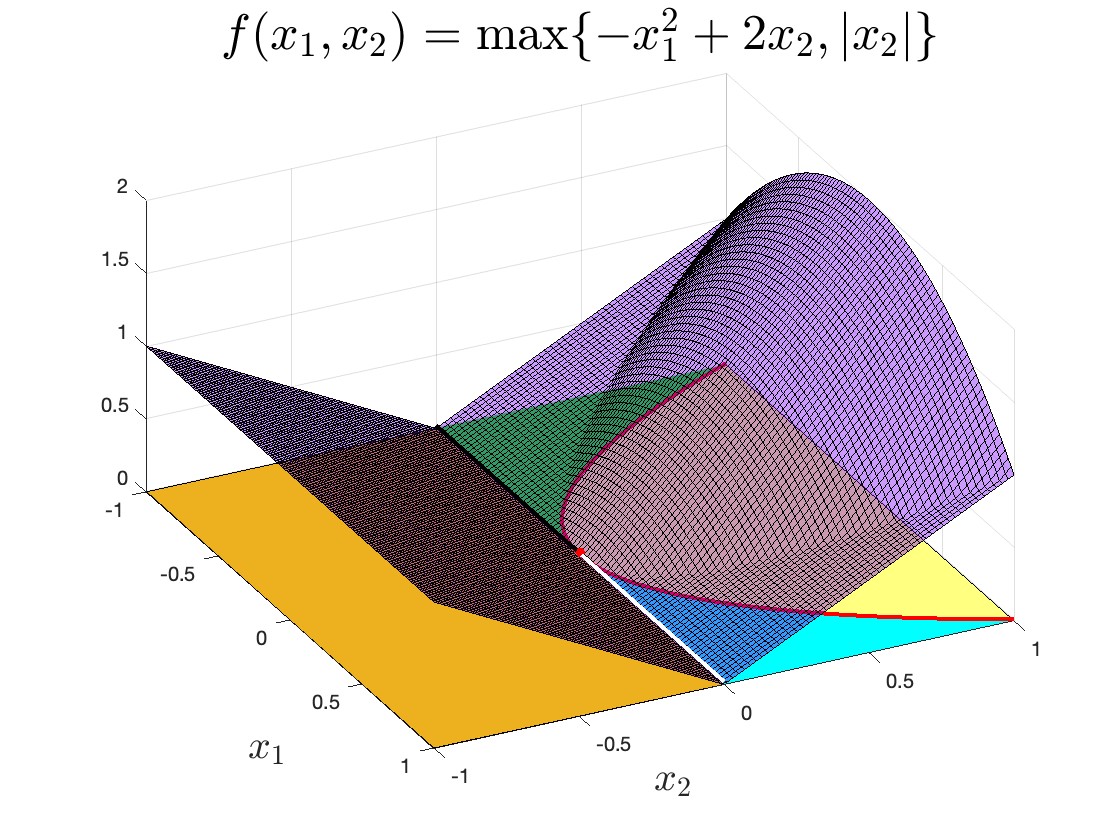}
  \caption{Verdier condition violated at $(0,0)$ along manifold of critical points.}
    \label{fig:not_verdier}
\end{subfigure}
\caption{Verdier stratification of the domain of two continuous semi-algebraic functions.}
\end{figure}

We are now ready to state our main result.

\begin{theorem}
\label{thm:suff_unstable}
Let $f:\mathbb{R}^n\rightarrow \mathbb{R}$ be a locally Lipschitz semi-algebraic function whose set of critical points we denote by $S$. Assume that $S$ is a $C^2$ manifold at some $x^*\in S$ of dimension less than $n$. Assume that there exist $\theta_1 \geqslant 0$, a neighborhood $U$ of $x^*$, and a continuous function $C:\mathbb{R}^n \rightarrow \mathbb{R}$ such that for all $\alpha>0$, there exist $c_1>0$ such that for any sequence $x_0,x_1,\hdots \in U \setminus S$ generated by the subgradient method with constant step size $\alpha$, we have $C(x_{k+1}) -  C(x_k) \geqslant c_1 d(x_k,S)^{\theta_1}$ for all $k\in \mathbb{N}$. The point $x^*$ is strongly unstable if 1) $\theta_1 = 0$ or 2) $\partial f$ is metrically $\theta_2$-subregular at $(x^*,0)$ with $\theta_2>1$ and $f$ satisfies the Verdier condition at $x^*$ along $S$.
\end{theorem}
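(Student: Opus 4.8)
The plan is to argue by contradiction. Shrink $\epsilon>0$ so that $B(x^*,\epsilon)$ lies inside the hypothesis neighborhood $U$, inside the neighborhoods where metric subregularity and the Verdier condition hold, and inside a tubular neighborhood of $S$ on which $P_S$ is single valued and $d(\cdot,S)$ inherits the $C^2$ geometry of $S$. Suppose some subgradient orbit $x_0,x_1,\dots$ stays in $B(x^*,\epsilon)$. Since $C$ is continuous it is bounded on this compact ball, and the hypothesis makes $k\mapsto C(x_k)$ nondecreasing because the increments $c_1\,d(x_k,S)^{\theta_1}$ are nonnegative; hence $C(x_k)$ converges, the increments are summable, and $d(x_k,S)^{\theta_1}\to 0$. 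This already closes case 1: when $\theta_1=0$ and $x_k\in U\setminus S$ the increment is at least $c_1>0$ for every $k$, contradicting convergence of $C(x_k)$, so no orbit avoiding $S$ can remain in the ball.

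For case 2 the same computation yields $d(x_k,S)\to 0$, and the task is to show this is incompatible with the orbit staying off $S$. I would first record the preliminary fact that $\nabla_S f\equiv 0$ on $S\cap U$: for $y\in S$ one has $0\in\partial f(y)$, and writing $0$ as a convex combination of limiting gradients $g_i=\lim\nabla f(x_j^{(i)})$ with $x_j^{(i)}\to y$ in $U\setminus S$, the Verdier inequality forces $P_{T_S(y)}(g_i)=\nabla_S f(y)$ for each $i$, whence $0=P_{T_S(y)}(0)=\nabla_S f(y)$. Now set $y_k:=P_S(x_k)$ and $d_k:=d(x_k,S)$. Since $(\partial f)^{-1}(0)=S$, metric $\theta_2$-subregularity of $\partial f$ at $(x^*,0)$ gives $\|v_k\|\geq d(0,\partial f(x_k))\geq\kappa\,d_k^{1/\theta_2}$ for any $v_k\in\partial f(x_k)$, while the Verdier condition with $\nabla_S f(y_k)=0$ bounds the tangential part by $\|P_{T_S(y_k)}(v_k)\|\leq L\,d_k$. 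Because $S$ is $C^2$ the tangent-normal split at $y_k$ is orthogonal, so $\|P_{N_S(y_k)}(v_k)\|\geq\sqrt{\kappa^2 d_k^{2/\theta_2}-L^2 d_k^2}\geq\tfrac{\kappa}{2}\,d_k^{1/\theta_2}$ for $d_k$ small, using $2/\theta_2<2$.

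The core estimate is that the step $x_{k+1}=x_k-\alpha v_k$ displaces $x_{k+1}$ off $S$ by far more than the current offset $d_k$: since $x_k-y_k\in N_S(y_k)$, its displacement from the affine tangent space $y_k+T_S(y_k)$ is $\|(x_k-y_k)-\alpha P_{N_S(y_k)}(v_k)\|\geq\alpha\|P_{N_S(y_k)}(v_k)\|-d_k\geq\tfrac{\kappa}{2}\alpha\,d_k^{1/\theta_2}-d_k$, and the $C^2$ curvature of $S$ contributes only an $O(\|x_{k+1}-y_k\|^2)=o(\alpha\,d_k^{1/\theta_2})$ correction to the genuine distance. Hence $d_{k+1}\geq\tfrac{\kappa}{4}\alpha\,d_k^{1/\theta_2}$, and because $1/\theta_2<1$ there is a threshold $\rho_0$ below which $\tfrac{\kappa}{4}\alpha\,d_k^{1/\theta_2-1}>1$, so once $d_k<\rho_0$ one has $d_{k+1}>d_k$. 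The distances are therefore eventually strictly increasing, contradicting $d_k\to0$. This is the step I expect to be the main obstacle, as it requires simultaneously establishing $\nabla_S f\equiv0$, the orthogonal split, and the quadratic curvature control that lets the first-order normal displacement dominate. Its payoff is that in both cases any orbit trapped in $B(x^*,\epsilon)$ must in fact meet $S$, or a point where $f$ fails to be differentiable and the single-valued analysis breaks down.

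Finally I would upgrade deterministic escape to strong instability. For almost every $x_0$ the iteration is single valued, $G_\alpha(x)=x-\alpha\nabla f(x)$, since the nondifferentiability set $N$ is Lebesgue null by Rademacher; both $N$ and $S$ are semi-algebraic of dimension $<n$. By the previous paragraph the trapped initial points form a subset of $\bigcup_{k}G_\alpha^{-k}(S\cup N)$. For each fixed $k$ the set $\{(\alpha,x):G_\alpha^{k}(x)\in S\cup N\}$ is semi-algebraic, so by a Fubini and semi-algebraic dimension argument its $\alpha$-fibers are null outside a finite exceptional set; invoking semi-algebraic (Hardt) triviality of the family $(G_\alpha)_\alpha$ over the parameter makes this exceptional set uniform in $k$, so that outside finitely many $\alpha$ the whole countable union is null. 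Thus for all but finitely many $\alpha$ and almost every initial point in $B(x^*,\epsilon)$ some iterate leaves $B(x^*,\epsilon)$, which is precisely strong instability of $x^*$.
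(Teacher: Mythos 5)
Your overall architecture (contradiction, case $\theta_1=0$ via boundedness of $C$, case 2 via a lower bound $d_{k+1}\gtrsim \alpha\,d_k^{1/\theta_2}$ forcing $d_k\not\to 0$, then an a.e.\ argument over initializations and step sizes) matches the paper, and your derivation of $\nabla_S f\equiv 0$ from the Verdier condition plus the Clarke structure theorem is a legitimate alternative to the paper's appeal to the semi-algebraic Morse--Sard theorem. However, the core estimate of case 2 has a genuine gap. You claim the curvature of $S$ contributes only an $O(\|x_{k+1}-y_k\|^2)=o\bigl(\alpha\,d_k^{1/\theta_2}\bigr)$ correction when passing from the distance to the affine tangent space $y_k+T_S(y_k)$ to the distance to $S$. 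This would require $\|x_{k+1}-y_k\|$ to vanish suitably as $d_k\to 0$, but $\|x_{k+1}-y_k\|\approx \alpha\|v_k\|$, and nothing in the hypotheses forces $\|v_k\|$ to be small near $S$: metric $\theta_2$-subregularity gives only a \emph{lower} bound on $\|v_k\|$, and Verdier controls only the tangential component $v_k^T$, not the normal one. For nonsmooth $f$ the normal component is typically of order $1$ arbitrarily close to $S$ — this happens in the paper's own applications, e.g.\ near the critical manifold $\{x_3=0\}$ of the ReLU example the subgradients have norm $\approx 1$. So for fixed $\alpha$ your correction term is $O(\alpha^2)$, a constant, while the main term $\tfrac{\kappa}{2}\alpha\,d_k^{1/\theta_2}$ tends to $0$ as $d_k\to 0$; the claimed inequality $d_{k+1}\geqslant \tfrac{\kappa}{4}\alpha\,d_k^{1/\theta_2}$ therefore does not follow whenever $S$ is genuinely curved. (It happens to hold in the ReLU example only because there $S$ is flat.)

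The paper's proof is built precisely to avoid this trap: it never compares $x_{k+1}$ to the tangent plane. Instead it writes $d(x_{k+1},S)\geqslant \alpha\|v_k\|-\|x_k-P_S(x_{k+1})\|$ and proves the error bound $\|x_k-P_S(x_{k+1})\|\leqslant (1+L\alpha c_3)\,d(x_k,S)$, i.e.\ an error that scales \emph{linearly in $d_k$} rather than quadratically in the step length. This is obtained from prox-regularity of the $C^2$ manifold $S$: $P_S$ is single-valued and $L$-Lipschitz on $B(x^*,2\epsilon)$, the representation $P_S=(I+N_S^c)^{-1}$ yields the key invariance $P_S(x_k-\alpha v_k^N)=P_S(x_k)$ (the projection is unchanged by a normal displacement staying inside the prox-regularity tube, which is guaranteed by $(2+L)\epsilon<c$ and the standing assumption $x_{k+1}\in B(x^*,\epsilon)$), and then the Verdier condition bounds $\|P_S(x_k)-P_S(x_{k+1})\|\leqslant L\alpha\|v_k-v_k^N\|=L\alpha\|v_k^T\|\leqslant L\alpha c_3 d_k$. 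To repair your argument you would need to replace the tangent-plane approximation by this (or an equivalent) mechanism; the quantifier ``for all but finitely many $\alpha>0$'' rules out simply taking $\alpha$ small enough to beat the curvature. A secondary, lesser issue: your Fubini/Hardt-triviality sketch for the measure-zero upgrade asserts exactly the statement that needs proof (a finite exceptional set of step sizes, uniform in the iteration count $k$, outside of which the trapped set is null); the paper obtains this by citing the cell decomposition theorem together with \cite[Claim 3]{bolte2020mathematical}, and your outline would need the dimension bound on $\{(\alpha,x): G_\alpha^k(x)\in S\cup N\}$ established rather than asserted.
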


\begin{proof} We begin with an outline of the proof. In order to establish instability, we reason by contradiction and assume that the iterates of the subgradient method remain in a neighborhood of a fixed critical point. We show that this implies that the function $C$ becomes unbounded along the iterates, which is impossible since this function is continuous. The key to showing unboundedness is to prove divergence of a series whose terms depend on the distance of the iterates to the manifold of critical points. For the proof to work, this distance should be positive for all iterates. We hence begin the proof by ensuring that this holds almost surely. After treating an easy case, the majority of the proof is devoted to showing that the distance to the set of critical points does not converge to zero.

We seek to show that there exists $\epsilon>0$ such that for all but finitely many constant step sizes $\alpha>0$, there exists a null subset $I_\alpha \subset \mathbb{R}^n$ such that for every initial point $x_0 \in B(x^*,\epsilon) \setminus I_\alpha$, at least one of the iterates of the subgradient method does not belong to $B(x^*,\epsilon)$. Since $S$ is a $C^2$ manifold at $x^*$ of dimension less than $n$, we have that $S\cap U$ is a semi-algebraic null set after possibly reducing $U$. 
By the cell decomposition theorem \cite[(2.11) p. 52]{van1998tame} and \cite[Claim 3]{bolte2020mathematical}, there exist $\alpha_1,\hdots,\alpha_m>0$ such that for all constant step sizes $\alpha \in (0,\infty) \setminus \{\alpha_1,\hdots,a_m\}$, there exists a null subset $I_\alpha \subset \mathbb{R}^n$ such that, for every initial point $x_0 \in \mathbb{R}^n \setminus I_\alpha$, none of the iterates $x_0,x_1,x_2,\hdots$ of the subgradient method belong to the semi-algebraic null set $S\cap U$.

\underline{Case 1: Assume that $\theta_1 = 0$.} Let $\epsilon >0$ such that $B(x^*,\epsilon) \subset U$. Let $\alpha \in (0,\infty) \setminus \{\alpha_1,\hdots,a_m\}$ and consider a sequence of iterates $x_0,x_1,x_2,\hdots \in \mathbb{R}^n$ of the subgradient method with constant step size $\alpha$ such that $x_0 \in B(x^*,\epsilon) \setminus I_\alpha$. We reason by contradiction and assume that $x_k \in B(x^*,\epsilon)$ for all $k \in \mathbb{N}$. Thus $x_k \notin S$ for all $k \in \mathbb{N}$. We have $C(x_{k+1}) - C(x_k) \geqslant c_1 d(x_{k},S)^{\theta_1}$ and
\begin{equation}
\label{eq:diverge}
    C(x_K) - C(x_0) = \sum\limits_{k = 0}^{K-1} C(x_{k+1}) - C(x_k) \geqslant \sum\limits_{k = 0}^{K-1} c_1 d(x_{k},S)^{\theta_1} = \sum\limits_{k = 0}^{K-1} c_1,
\end{equation}
which converges to $+\infty$ as $K$ converges to $+\infty$. Since $C$ is continuous and $x_K \in B(x^*,\epsilon)$, this yields a contradiction.

\underline{Case 2: Assume that $\theta_1>0$.} We proceed in four steps. We begin by choosing $\epsilon>0$ sufficiently small so that the objective function admits favorable geometric properties in $B(x^*,2\epsilon)$ (step 1). We then use these properties, including metric $\theta_2$-subregularity, to show that $d(x_{k+1},S) \geqslant d(x_k,S)$ whenever $d(x_k,S)$ is small enough (step 2). This prevents $d(x_k,S)$ from converging to zero. Similar to \eqref{eq:diverge}, this leads to a divergent series $\sum_{k = 0}^{\infty} c_1 d(x_{k},S)^{\theta_1}$ and hence to a contradiction. A computation reveals that proving the inequality on the distances reduces to showing that a certain ratio is bounded (step 3), at which point we invoke the Verdier condition. This in turn requires showing that the projection is preserved when taking a step of a slight modification of the subgradient method (step 4). 

\underline{Step 1} We begin by choosing $\epsilon>0$ such that the projection $P_S$ onto $S$ is Lipschitz continuous and identifies on $B(x^*,2\epsilon)$ with the preimage of a mapping related to the normal cone $N_S(x)$, among other properties.

Since $S$ is a $C^2$ manifold at $x^*$, $S\cap U$ is strongly amenable \cite[10.23 Definition (b)]{rockafellar2009variational} after possibly reducing $U$. It follows that $S\cap U$ is prox-regular \cite[13.31 Exercise, 13.32 Proposition]{rockafellar2009variational} and locally closed \cite[p. 28]{rockafellar2009variational}. Therefore, there exists a closed neighborhood $V \subset U$ of $x^*$ such that $S\cap V$ is closed and prox-regular at $x^*$. By \cite[Theorem 1.3 (j)]{poliquin2000local}, there exists $\epsilon>0$ such that the projection $P_{S\cap V}$ onto $S\cap V$ is single-valued and Lipschitz continuous with some constant $L>0$ on $B(x^*,2\epsilon)$. After possibly reducing $\epsilon>0$, we have $P_{S\cap V}(x) = P_S(x)$ for all $x\in B(x^*,2\epsilon)$. (Indeed, if $B(x^*,5\epsilon) \subset V$, then $\|x-y\|\geqslant 3\epsilon$ for all $y \in S \setminus V$ while $\|x-x^*\|\leqslant 2\epsilon$.) Again by \cite[Theorem 1.3 (j)]{poliquin2000local}, there exists $c>0$ such that $P_S(x) = (I+N_S^c)^{-1}(x)$ for all $x \in B(x^*,2\epsilon)$, where $N_S^c$ is a set-valued mapping defined from $\mathbb{R}^n$ to the subsets of $\mathbb{R}^n$ by
\begin{equation}
\label{eq:N_S^c}
     N_S^c(x) := \left\{\begin{array}{ll}
         N_S(x) \cap \mathring{B}(0,c) & \text{if $x\in S$}, \\
         \emptyset & \text{else}.
    \end{array}\right.
\end{equation}
After possibly reducing $\epsilon$, we may assume that $(2+L)\epsilon < c$. 

In the following, we further reduce $\epsilon$ whenever necessary.
Since $\partial f$ is metrically $\theta_2$-subregular at $(x^*,0)$, there exists $c_2>0$ such that $d(x,S) \leqslant c_2 d(0,\partial f(x))^{\theta_2}$ for all $x \in B(x^*,\epsilon)$. Since $f$ satisfies the Verdier condition at $x^*$ along $S$, there exists $c_3>0$ such that for all $y \in B(x^*,2\epsilon)\cap S$, $x \in B(x^*,2\epsilon) \setminus S$ and $v \in \partial f(x)$, we have $\left\|P_{T_{
S}(y)}(v)\right\| \leqslant c_3\|x-y\|$. Indeed, $\nabla_S f(y) = 0$ for all $y\in B(x^*,2\epsilon) \cap S$ because $f$ agrees with a constant function along $S$ around $x^*$ by the semi-algebraic Morse-Sard theorem \cite[Corollary 9]{bolte2007clarke}.

\underline{Step 2} Having chosen $\epsilon>0$, let $\alpha \in (0,\infty) \setminus \{\alpha_1,\hdots,a_m\}$. Consider a sequence of iterates $x_0,x_1,x_2,\hdots$ of the subgradient method with constant step size $\alpha$ such that $x_0 \in B(x^*,\epsilon) \setminus I_\alpha$. As in the case when $\theta_1 = 0$, we reason by contradiction and assume that $x_k \in B(x^*,\epsilon)$ for all $k \in \mathbb{N}$. Thus $x_k \notin S$ for all $k \in \mathbb{N}$. Also, for all $K \in \mathbb{N}$, we have $C(x_K) - C(x_0) \geqslant \sum_{k=0}^{K-1} c_1 d(x_{k},S)^{\theta_1}$. In order to show that $\sum_{k=0}^{K-1} c_1 d(x_{k},S)^{\theta_1}$ diverges, it suffices to show that $d(x_k,S)$ does not converge to zero. To this end, we next show that $d(x_{k+1},S) \geqslant d(x_k,S)$ whenever $d(x_k,S)>0$ is sufficiently small (it is non-zero because $x_k \notin S$).

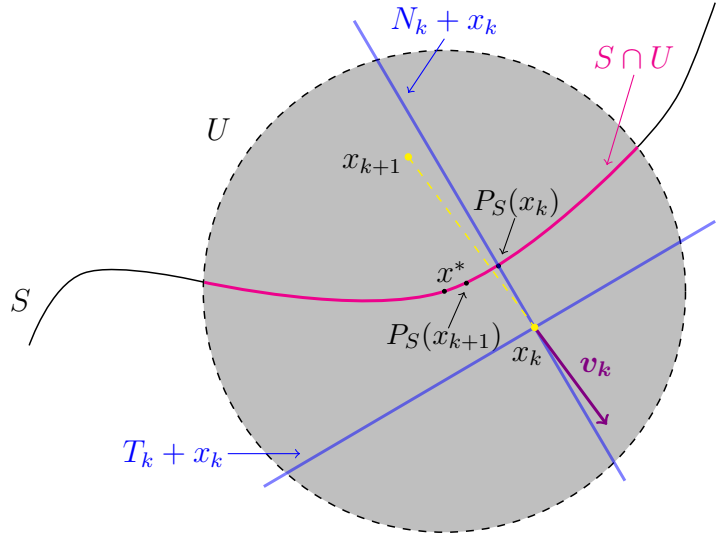
\begin{figure}[ht]
\centering
    \begin{tikzpicture}[scale=2.4]
\fill[gray!50] (5.5,.9) circle (38pt); 
    \draw[line width=.2mm,dashed] (5.5,.9) circle (38pt);
    \fill (3.15,0.85) node{\normalsize $S$};
    \draw[line width=.2mm] plot [smooth,tension=0.689] coordinates {(3.2,0.6)(3.5,1.0)(4.17,0.95)};
    \draw[magenta,line width=.4mm, name path=A] plot [smooth,tension=0.689] coordinates {(4.17,0.95) (5.5,.9) (6.57,1.7)};
    \draw[line width=.2mm] plot [smooth,tension=0.689] coordinates {(6.57,1.7)(6.8, 2)(7,2.5)};
    \draw[magenta,->] (6.55,2.1)-- (6.4,1.6);
    \fill (6.55,2.2)  node {\normalsize $\textcolor{magenta}{S\cap U}$};

    \fill (4.25,1.8)  node {\normalsize $U$};
    \filldraw (5.5,.9) circle (.3pt);
    \fill (5.54,1.01)  node {\normalsize $x^*$};

     \filldraw (5.622,0.945) circle (.3pt);
    \fill (5.5,0.65)  node {\small $P_S(x_{k+1})$};
     \draw[->](5.522,0.695)--(5.61,0.895);
     \filldraw (5.8,1.04) circle (.3pt);
     \fill (5.9,1.39)  node {\small $P_S(x_k)$};
     \draw[->](5.89,1.305)--(5.82,1.105);
     \draw[blue,opacity=0.5,line width=.4mm] plot [smooth,tension=1] coordinates {(5,2.4)(6.5,-0.15)};
     \fill (5.5,2.4)  node {\normalsize $\textcolor{blue}{N_k + x_k}$};
     \draw[blue,->] (5.5,2.3)-- (5.3,2);
     \draw[blue,opacity=0.5,line width=.4mm] plot [smooth,tension=1] coordinates {(4.5,-0.1824)(7,1.2882)};
     \fill (4,0) node {\normalsize $\textcolor{blue}{T_k + x_k}$};
     \draw[blue,->] (4.3,0)-- (4.7,0);
       \draw[line width=.4mm,violet,->] (6,.7)-- (6.4,.16);
       \fill (6.34,.48)  node {\normalsize $\textcolor{violet}{\boldsymbol{v_k}}$}; 
       \draw[yellow,dashed,line width=.2mm] plot [smooth, tension=1] coordinates {(5.3,1.6450)(6,.7)};
       
    \filldraw[yellow] (6,.7) circle (.5pt);
      \fill (5.96,.54)  node {\normalsize $x_k$};
       \filldraw[yellow] (5.3,1.6450) circle (.5pt);
       \fill (5.1,1.6)  node {\normalsize $x_{k+1}$};
    \end{tikzpicture}
\caption{Illustration of $d(x_{k+1},S) \geqslant d(x_k,S)$ for $d(x_k,S)$ sufficiently small.}
\label{fig:chetaev}
\end{figure}

Since $(x_k)_{k\in \mathbb{N}}$ is generated by the subgradient method with constant step size $\alpha$, for all $k \in \mathbb{N}$ there exists $v_k \in \partial f(x_k)$ such that $x_{k+1} = x_k - \alpha v_k$. 
As illustrated in Figure \ref{fig:chetaev}, we have
\begin{subequations}
    \begin{align}
        d(x_{k+1},S) & = \|x_{k+1} - P_S(x_{k+1})\| \label{dist_a} \\
        & = \|x_k - P_S(x_{k+1}) - \alpha v_k\| \label{dist_b} \\
        & \geqslant \alpha \|v_k\| - \|x_k-P_S(x_{k+1})\| \label{dist_c} \\
        & \geqslant \alpha c_2^{-1/\theta_2} d(x_k,S)^{1/\theta_2}- \|x_k-P_S(x_{k+1})\| \label{dist_d} \\
        & = d(x_k,S)\left(\alpha c_2^{-1/\theta_2} d(x_k,S)^{1/\theta_2-1} - \frac{\|x_k-P_S(x_{k+1})\|}{d(x_k,S)}\right) \label{dist_e}\\
        & \geqslant d(x_k,S) \label{dist_f}
    \end{align}
\end{subequations}
provided that $d(x_k,S)$ sufficiently small and that $\|x_k-P_S(x_{k+1})\|/d(x_k,S)$ is upper bounded on $B(x^*,\epsilon) \setminus S$ if $d(x_k,S)$ sufficiently small. Indeed, in \eqref{dist_a} $P_S(x_{k+1})$ is a singleton because $x_{k+1} \in B(x^*,\epsilon)$. \eqref{dist_b}-\eqref{dist_c} are deduced from the update rule and the triangular inequality. \eqref{dist_d} follows from the metric $\theta_2$-subregularity of $\partial f$ at $x^*$ for $0$. In the second factor of \eqref{dist_e}, the first term $c_2^{-1/\theta_2} d(x_k,S)^{1/\theta_2-1}$ diverges as $d(x_k,S)$ nears zero because $1/\theta_2-1< 0$. Hence, if the second term $\|x_k-P_S(x_{k+1})\|/d(x_k,S)$ is bounded, then the lower bound \eqref{dist_f} holds. 

\underline{Step 3} We next focus on proving that $\|x_k-P_S(x_{k+1})\|/d(x_k,S)$ is bounded. Since $x_k \in B(x^*, \epsilon)\setminus S$, $P_S(x_k) \in B(x^*,2\epsilon)\cap S$, and $v_k \in \partial f(x_k)$, by the Verdier condition we have $\|P_{T_S(P_S(x_k))}(v_k)\| \leqslant c_3 \|x_k - P_S(x_k)\|$ for all $k\in \mathbb{N}$. For notational convenience, let $T_k:= T_S(P_{S}(x_k))$ and $N_k:= N_S(P_{S}(x_k))$ respectively be the tangent and normal cones of $S$ at $P_S(x_k)$. Also, let $v^T_k := P_{T_k}(v_k)$ and $v^N_k := P_{N_k}(v_k)$ respectively be the projections of $v_k$ on $T_k$ and $N_k$. With these notations, we have $\|v_k^T\| \leqslant c_3\|x_k - P_{S}(x_k)\|$ for all $k\in \mathbb{N}$. Observe that
\begin{subequations}
    \begin{align}
        \frac{\|x_k-P_S(x_{k+1})\|}{d(x_k,S)} & \leqslant \frac{\|x_k-P_S(x_k)\| + \|P_S(x_k)-P_S(x_{k+1})\|}{d(x_k,S)} \label{proj_a} \\
        & = 1 + \frac{\|P_S(x_k)-P_S(x_{k+1})\|}{\|x_k-P_S(x_k)\|} \label{proj_b} \\
        & = 1 + \frac{\|P_S(x_k-\alpha v_k^N)-P_S(x_k-\alpha v_k)\|}{\|x_k-P_S(x_k)\|} \label{proj_c} \\
        & \leqslant 1 + L \alpha \frac{\|v_k^N-v_k\|}{\|x_k-P_S(x_k)\|} \label{proj_d} \\
        & \leqslant 1 + L \alpha c_3 \label{proj_e}
    \end{align}
\end{subequations}
provided that $d(x_k,S)$ sufficiently small. Indeed, \eqref{proj_a} follows from the triangular inequality. \eqref{proj_b} holds because $d(x_k,S) = \|x_k-P_S(x_k)\|$. \eqref{proj_c} holds because of the update rule $x_{k+1} = x_k-\alpha v_k$ and the fact that $P_S(x_k) = P_S(x_k-\alpha v_k^N)$, which is the object of the next step. \eqref{proj_d} holds because $P_S$ is $L$-Lipschitz continuous in $B(x^*,2\epsilon)$. Finally, \eqref{proj_e} follows from the Verdier condition and the fact that $v_k = v_k^T + v_k^N$.

\underline{Step 4} It remains to prove that $P_S(x_k) = P_S(x_k-\alpha v_k^N)$ when $d(x_k,S)$ is sufficiently small. We may thus assume that $d(x_k,S) \leqslant \epsilon/(\alpha c_3)$, which guarantees that $x_k-\alpha v_k^N \in B(x^*,2\epsilon)$. Indeed, 
\begin{subequations}
    \begin{align}
        \| x_k-\alpha v_k^N - x^*\| & \leqslant \| x_k - \alpha v_k - x^*\| + \alpha \| v_k^T\| \\
        & \leqslant \epsilon + \alpha c_3 \|x_k - P_S(x_k)\| \\
        & \leqslant \epsilon + \alpha c_3 d(x_k,S) \\
        & \leqslant 2\epsilon.
    \end{align}
\end{subequations}
Recall that $P_S(x) = (I+N_S^c)^{-1}(x)$ for all $x \in B(x^*,2\epsilon)$. We have
\begin{subequations}
    \begin{align}
        P_S( x_k-\alpha v_k^N ) & = (I+N_S^c)^{-1}(x_k-\alpha v_k^N) \label{normal_a} \\
        & = (I+N_S^c)^{-1}(P_S(x_k) + x_k - P_S(x_k) - \alpha v_k^N) \label{normal_b} \\
        & = P_S(x_k). \label{normal_c}
    \end{align}
\end{subequations}
Indeed, \eqref{normal_c} is equivalent to $P_S(x_k) + x_k - P_S(x_k) - \alpha v_k^N \in (I+N_S^c)(P_S(x_k))$, that is to say, $x_k - P_S(x_k) - \alpha v_k^N \in N_S^c(P_S(x_k))$. Since $P_S(x_k) \in S$, by definition of $N_S^c$ in \eqref{eq:N_S^c}, $N_S^c(P_S(x_k)) = N_k \cap \mathring{B}(0,c)$. To see why $x_k - P_S(x_k) - \alpha v_k^N \in N_k$, observe that $P_S(x_k) = P_S(x_k - P(x_k) + P(x_k)) = (I+N_S^c)^{-1}(x_k - P(x_k) + P(x_k))$. Thus $x_k - P(x_k) + P(x_k) \in (I+N_S^c)(P_S(x_k))  = P_S(x_k) + N_S^c(P_S(x_k))$, that is to say, $x_k - P(x_k) \in N_k$. Since $N_k$ is a linear subspace, it follows that $x_k - P_S(x_k) - \alpha v_k^N \in N_k$. Finally, 
    \begin{align*}
        \|x_k - P_S(x_k)-\alpha v_k^N\| & \leqslant \|x_k -\alpha v_k^N - x^*\| + \|x^* - P_S(x_k)\|\\
        & \leqslant 2\epsilon +  \|P_S(x^*) - P_S(x_k)\|\\
        & \leqslant 2\epsilon + L\|x^* - x_k\|\\
        & \leqslant (2 + L) \epsilon<c.
    \end{align*}
\end{proof}

In the next section, Theorem \ref{thm:suff_unstable} will be used to prove instability of spurious local minima in two practical problems; see Propositions \ref{prop:neural_network_unstable} and \ref{prop:rpca_rankr_unstable}. Recall that Lyapunov functions are used in the theory of ordinary differential equations (or inclusions) to prove the stability of an equilibrium point \cite{liapounoff1907probleme}. For example, a locally Lipschitz semi-algebraic objective function $f$ is a Lyapunov function for the continuous-time subgradient dynamics $x'\in -\partial f(x)$ around a strict local minimum $x^*$ \cite[Theorem 5.16 1.]{sastry2013nonlinear}. Indeed, $f$ is positive around $x^*$ and $f\circ x$ is decreasing along any trajectory $x$. The objective function is however not monotonic along discrete-time dynamics, in which case it ceases to be a Lyapunov function.

In contrast to Lyapunov functions, Chetaev functions are used to prove instability \cite{chetaev1961stability}. By \cite[Theorem 2.14]{braun2021stability}, an equilibrium point of ordinary differential equation is unstable if there exists a continuous function with positive values in any neighborhood of the equilibrium where it is equal to zero, and it is increasing along any trajectory (see also \cite[Theorems 5.29 and 5.30]{sastry2013nonlinear}). Chetaev functions have gained renewed interest recently in the context of obstacle avoidance in control, where one seeks to render the obstacles unstable by feedback \cite[Section III. B.]{braun2019uniting} \cite[Section IV. B.]{braun2018complete}. 
The function $C:\mathbb{R}^n \rightarrow \mathbb{R}$ in Theorem \ref{thm:suff_unstable} plays the role of a Chetaev function in a neighborhood of the point $x^*$. So long as the iterates $x_k$ stay near $x^*$ and avoid the critical points, the Chetaev function values $C(x_k)$ increase. If the increase is lower bounded by a positive constant at every iteration (i.e., when $\theta_1=0$), then we may readily conclude. Otherwise, the local geometry of the objective function comes into play. 

The fact that the exponent in the metric subregularity of the subdifferential is greater than one prevents the objective function from having a locally Lipschitz gradient if it is differentiable. The Verdier condition characterizes how fast subgradients become normal to the set of critical points in the vicinity of $x^*$. Together, these two conditions ensure that the iterates of the subgradient method do not converge to the set of critical points around $x^*$. Then the values $C(x_k)$ converge to plus infinity if the iterates remain near $x^*$, resulting in instability. In contrast, Bianchi \textit{et al.} \cite[Proposition 4]{bianchi2022stochastic} and Davis \textit{et al.} \cite[Proposition 5.2]{davis2023active} use the Verdier condition to ensure that the projection of the iterates on an active manifold containing a saddle point correspond to an inexact Riemannian gradient method with an implicit retraction. This technique is thus not suitable for proving instability of local minima.

In order to avoid assuming that the inequality $C(x_{k+1}) -  C(x_k) \geqslant c_1 d(x_k,S)^{\theta_1}$ holds for all $k \in \mathbb{N}$ in Theorem \ref{thm:suff_unstable}, one may require the Chetaev function to be convex and $\langle s , s' \rangle \leqslant  -d(x,S)^{\theta_1}$ for all $x\in U \setminus S, s \in \partial C(x),$ and $s' \in \partial f(x)$. Indeed, we then have $C(x_{k+1}) - C(x_k) \geqslant \langle s_k,x_{k+1}-x_k\rangle = \langle s_k,-\alpha s_k'\rangle \geqslant \alpha d(x,S)^{\theta_1}$ where $s_k \in \partial C(x_k)$ and $s_k' \in \partial f(x_k)$. These slightly stronger conditions hold in the first example in the next section.

\section{Applications}
\label{sec:Applications}

In this section, we apply Theorem \ref{thm:suff_unstable} to two examples using the Euclidean inner product. We first show that instability occurs in an example of ReLU neural network with $\ell_1$ loss, namely $(x_1,x_2,x_3)\in \mathbb{R}^3 \mapsto  |x_3 \max\{x_2,0\} - 1| + |x_3 \max\{x_1 + x_2,0\}|$. Indeed, it is the loss function when one seeks to fit the ReLU neural network $(a_1,a_2) \in \mathbb{R}^2 \mapsto x_3 \max\{x_1 a_1 + x_2 a_2,0\}$ over two data points $(0,1)$ and $(1,1)$ with corresponding labels $1$ and $0$. Figure \ref{fig:nn_relative} reveals that the iterates of the subgradient method move away from a fixed spurious local minimum despite being initialized nearby. Five trials are displayed, each corresponding to a uniform choice of constant step size in $[0.05,0.15]$ and a random initial point within $10^{-3}$ relative distance of the local minimum. Figure \ref{fig:nn_lyapunov} shows the corresponding values of an associated Chetaev function $C:\mathbb{R}^3\rightarrow\mathbb{R}$ defined by $C(x_1,x_2,x_3) := 1-x_1$. The fact that this function must increase indefinitely if the iterates remain near the local minimum is at the root of the instability (see Proposition \ref{prop:neural_network_unstable}). Figure \ref{fig:nn_fv} shows that the objective function values eventually stabilize around the global minimum value.

\begin{figure}[ht]
    \centering
     \begin{subfigure}{.49\textwidth}
  \centering
  \includegraphics[width=.95\textwidth]{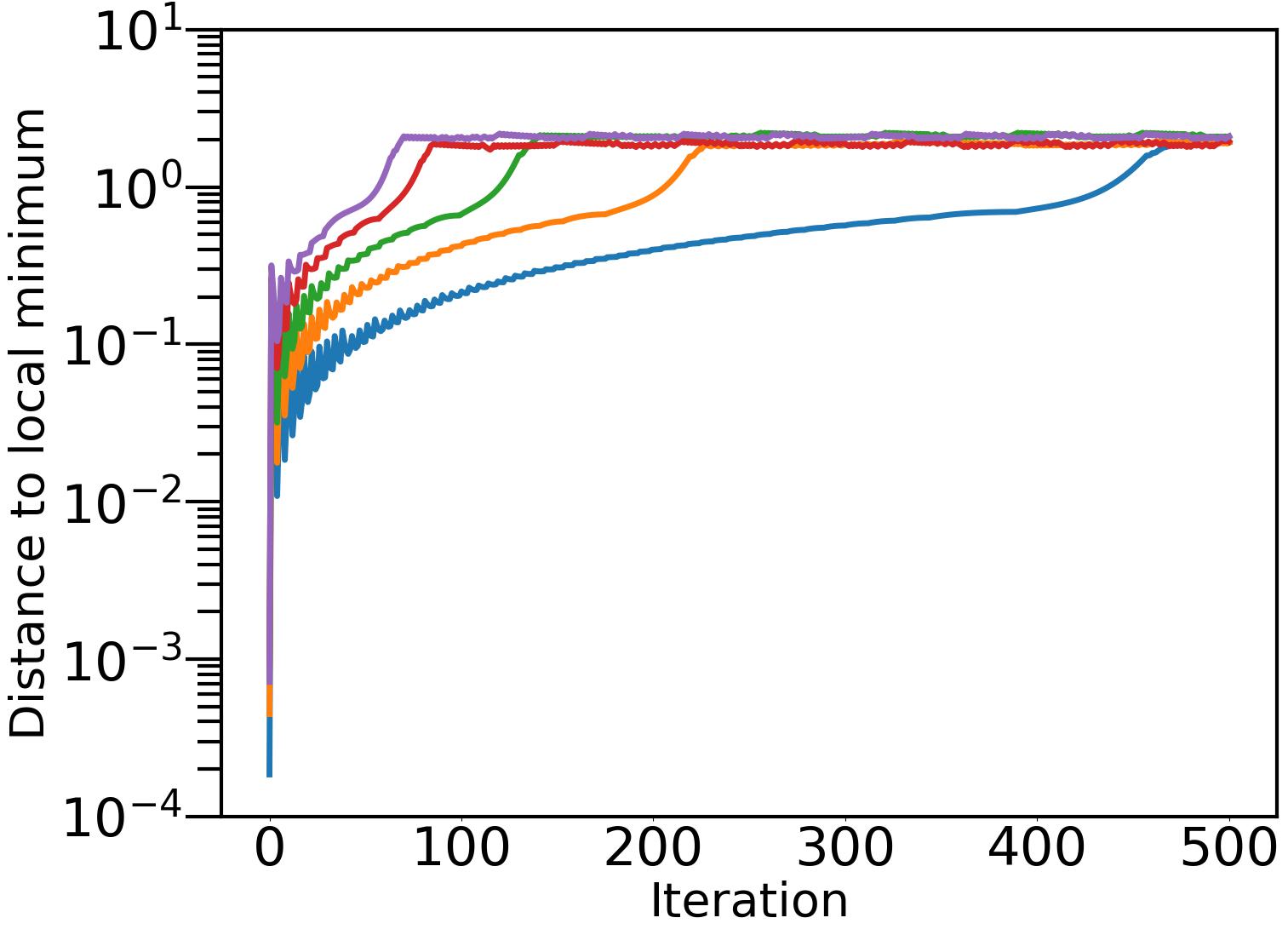}
  \caption{}
  \label{fig:nn_relative}
\vspace*{1mm}
  \label{fig:nn_distance}
 \end{subfigure}
 \begin{subfigure}{.49\textwidth}
  \centering
  \includegraphics[width=.95\textwidth]{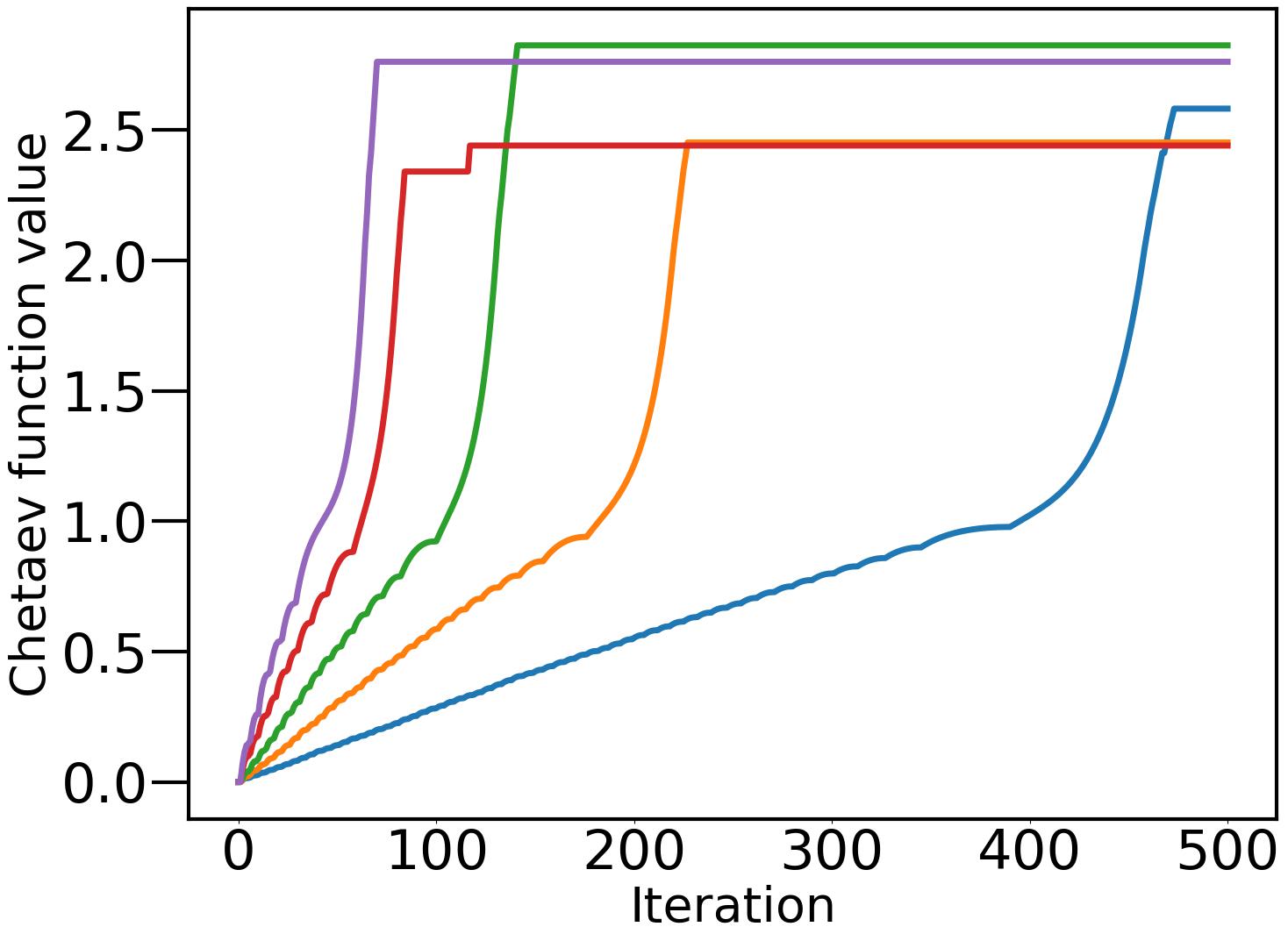}
  \caption{}
  \label{fig:nn_lyapunov}
 \end{subfigure}
  \begin{subfigure}{.49\textwidth}
  \centering
  \includegraphics[width=.95\textwidth]{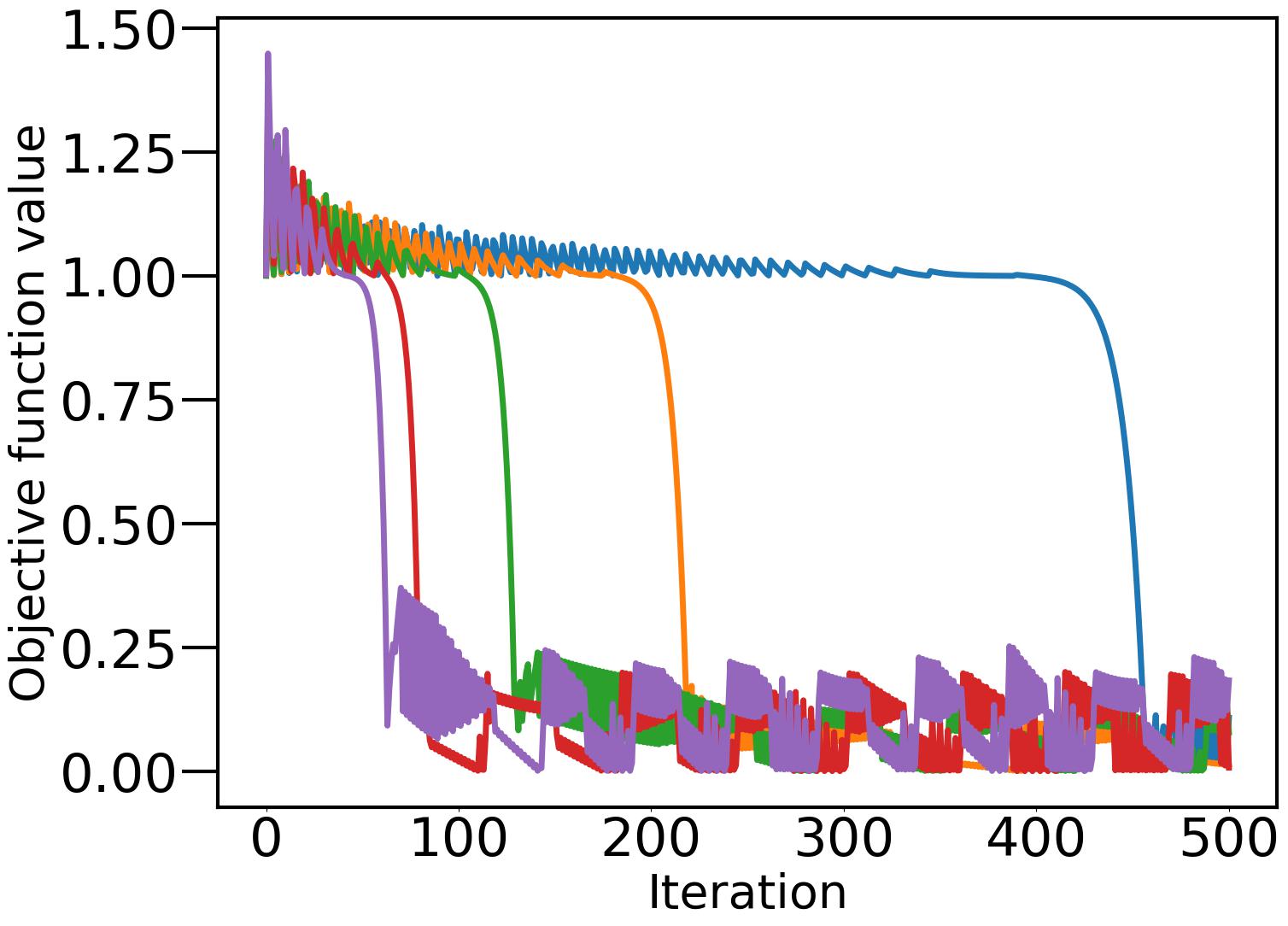}
  \caption{}
  \label{fig:nn_fv}
 \end{subfigure}
    \caption{Subgradient method randomly initialized near a spurious local minimum of a ReLU neural network with $\ell_1$ loss (5 trials with different step sizes).}
     \label{fig:nn}
\end{figure}

\begin{proposition}
\label{prop:neural_network_unstable}
The point $(1,1,0)$ is a strongly unstable spurious local minimum of the function defined from $\mathbb{R}^3$ to $\mathbb{R}$ by $f(x_1,x_2,x_3) :=  |x_3 \max\{x_2,0\} - 1| + |x_3 \max\{x_1 + x_2,0\}|$.
\end{proposition}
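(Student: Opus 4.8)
The plan is to reduce everything to a local computation near $(1,1,0)$ and then invoke Theorem~\ref{thm:suff_unstable} in case 2 with the Chetaev function $C(x_1,x_2,x_3) := 1-x_1$ announced in the text. First I would note that on a small ball around $(1,1,0)$ one has $x_2>0$ and $x_1+x_2>0$, so the inner maxima resolve and $f(x) = |x_2 x_3 - 1| + |x_3|(x_1+x_2)$; since $x_2 x_3$ is near $0<1$, the first absolute value opens as $1-x_2 x_3$. Splitting on the sign of $x_3$ then yields the clean local formulas $f = 1 + x_1 x_3$ for $x_3>0$ and $f = 1 - x_3(x_1+2x_2)$ for $x_3<0$, with $f\equiv 1$ on $\{x_3=0\}$. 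Both branches exceed $1$ near $(1,1,0)$ (using $x_1>0$, resp. $x_1+2x_2>0$), so $f\geqslant 1 = f(1,1,0)$ on the ball and $(1,1,0)$ is a (non-strict) local minimum; it is spurious because $f(-1,1,1)=0$.

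Next I would determine the critical set $S$. For $x_3\neq 0$ the function is smooth with gradient $(x_3,0,x_1)$ (if $x_3>0$) or $(-x_3,-2x_3,-x_1-2x_2)$ (if $x_3<0$), whose norm stays bounded away from zero near $(1,1,0)$, so no critical points lie off $\{x_3=0\}$. On $\{x_3=0\}$ the Clarke subdifferential is the segment $\mathrm{conv}\{(0,0,x_1),(0,0,-x_1-2x_2)\}$, which contains the origin since $x_1\approx 1 > 0 > -3\approx -x_1-2x_2$. Hence $S\cap U = (\mathbb{R}^2\times\{0\})\cap U$, an affine (thus $C^\infty$, in particular $C^2$) manifold of dimension $2<3$, with $T_S(y)=\mathbb{R}^2\times\{0\}$, $N_S(y)=\{0\}\times\{0\}\times\mathbb{R}$, and $d(x,S)=|x_3|$; for $x$ close enough to $(1,1,0)$ the nearest Clarke-critical point lies in $S\cap U$, so $d(x,S)$ is indeed the distance to the full critical set.

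Then I would verify the three remaining hypotheses. For the Chetaev increment, $C$ is linear with $\nabla C=(-1,0,0)$ and the first gradient component of $f$ equals $|x_3|$ on both branches, so any iterate in $U\setminus S$ gives $C(x_{k+1})-C(x_k)=\alpha v_{1,k}=\alpha|x_{3,k}|=\alpha\,d(x_k,S)$, i.e. the bound holds with $\theta_1=1$ and $c_1=\alpha$ (equivalently, via the convex-Chetaev variant in the closing remark, $\langle \nabla C, v\rangle = -|x_3| = -d(x,S)^{1}$). Because $\|\nabla f(x)\|$ is bounded below off $S$ while $d(x,S)=|x_3|\to 0$, metric $\theta_2$-subregularity of $\partial f$ at $((1,1,0),0)$ holds for every exponent, so I take $\theta_2=2>1$. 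Finally, since $f\equiv 1$ on $S$ forces $\nabla_S f=0$, the Verdier condition reduces to $\|P_{T_S(y)}(v)\| = \sqrt{v_1^2+v_2^2} \leqslant \sqrt{5}\,|x_3| \leqslant \sqrt{5}\,\|x-y\|$ (the last step since the third coordinate of $x-y$ is $x_3$), which holds with $c=\sqrt{5}$; Theorem~\ref{thm:suff_unstable} then yields strong instability.

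I expect the only genuinely delicate point to be organizing the local analysis so that all estimates hold on one common neighborhood and confirming that the critical set coincides with $\{x_3=0\}$ there, with no stray critical points intruding. The seemingly counterintuitive fact that metric $\theta_2$-subregularity holds for large $\theta_2$—because the one-sided gradients remain bounded away from zero even though $0$ belongs to the subdifferential along $S$—is precisely what places the example in case 2 rather than case 1, and it deserves a careful statement; everything else is a direct substitution into Theorem~\ref{thm:suff_unstable}.
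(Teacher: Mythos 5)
Your proof is correct and follows essentially the same route as the paper: the same local resolution of $f$ near $(1,1,0)$, the same identification of the critical set with $\{x_3=0\}$ locally, the same Chetaev function $C=1-x_1$ with $\theta_1=1$ and $c_1=\alpha$, and the same Verdier constant $\sqrt{5}$. Two details differ, both in your favor. First, where the paper identifies $S\cap U$ by invoking the definable Morse--Sard theorem, you compute the Clarke subdifferential directly on and off $\{x_3=0\}$; this is more self-contained and also justifies your (needed) observation that $d(x,S)=|x_3|$ for $x$ close to $(1,1,0)$, i.e.\ that no critical points outside $U$ interfere. Second, and more substantively, the paper's own proof verifies metric $\theta_2$-subregularity with $\theta_2:=1$ and $c_2:=1$, which does not literally satisfy the hypothesis $\theta_2>1$ of case 2) of Theorem~\ref{thm:suff_unstable}; your choice $\theta_2=2$, justified by the fact that $\|\nabla f\|$ is bounded below on $U\setminus S$ while $d(\cdot,S)\to 0$, is exactly the repair needed, and you are right to single this out as the delicate point --- in the theorem's proof the divergence of $d(x_k,S)^{1/\theta_2-1}$ as $d(x_k,S)\to 0$ requires $\theta_2>1$, so the exponent genuinely matters.
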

\begin{proof}
There exists a neighborhood $U$ of $(1,1,0)$ such that for all $(x_1,x_2,x_3)\in U$, we have $x_1\geqslant 1/2$, $x_2 \geqslant 1/2$ and $x_2 x_3 < 1$. Thus inside $U$ we have $f(x_1,x_2,x_3) = |x_3 x_2-1|+|x_3(x_1+x_2)| = 1-x_3 x_2 + |x_3|(x_1+x_2) = 1 + x_2(|x_3|-x_3)+x_1|x_3|\geqslant f(1,1,0) > f(-1,1,1)$, with equality in the inequality if and only if $x_3=0$. It follows that $(1,1,0)$ is a spurious local minimum. We next show that it is strongly unstable using Theorem \ref{thm:suff_unstable}. The function $f$ is locally Lipschitz and semi-algebraic. Let $S$ denote the set of critical points of $f$. By the definable Morse-Sard theorem \cite[Corollary 9]{bolte2007clarke} and by shrinking the neighborhood $U$ if necessary, $S \cap U = \{(x_1,x_2,x_3)\in U : x_3 = 0\}$ is a $C^2$ manifold of dimension $2$ at $(1,1,0)$. Let  $\theta_1 := 1$ and $C:\mathbb{R}^3\rightarrow\mathbb{R}$ be the continuous function defined by $C(x_1,x_2,x_3) := 1 - x_1$. Let $\alpha>0$ and consider a sequence $(x_1^k,x_2^k,x_3^k)_{k \in \mathbb{N}}$ generated by the subgradient method with constant step size $\alpha$ such that $(x_1^k,x_2^k,x_3^k) \in U\setminus S$ for all $k\in \mathbb{N}$. Letting $c_1 :=\alpha$, we have $C(x_1^{k+1},x_2^{k+1},x_3^{k+1}) - C(x_1^{k},x_2^{k},x_3^k) = x_1^k - x_1^{k+1} =  \alpha |x_3^k| = c_1 d((x_1^{k},x_2^{k},x_3^k),S)^{\theta_1}$ for all $k \in \mathbb{N}$. Letting $\theta_2 := 1$, and $c_2:= 1$,
we have $d((x_1,x_2,x_3),S) \leqslant c_2 d(0,\partial f(x_1,x_2,x_3))^{\theta_2}$ for all $(x_1,x_2,x_3) \in U$, so $\partial f$ is metrically $\theta_2$-subregular at $((1,1,0),(0,0,0))$. Finally, letting $c_3 := \sqrt{5}$, for all $(x_1,x_2,x_3) \in  U\setminus S$, $(y_1,y_2,y_3) \in S\cap U$, and $(v_1,v_2,v_3) \in \partial f((x_1,x_2,x_3))$, we have $\|P_{T_S(y_1,y_2,y_3)}(v_1,v_2,v_3) - \nabla_S f(y_1,y_2,y_3)\|^2 = \|(v_1,v_2,0)\|^2 = |x_3|^2 + (|x_3| - x_3)^2 \leqslant 5x_3^2 \leqslant c_3^2\|(x_1,x_2,x_3) - (y_1,y_2,0)\|^2$. Thus $f$ satisfies the Verdier condition at $(1,1,0)$ along $S$.
\end{proof}

Second, we show that instability occurs in an example of robust principal component analysis with real-world data. The objective function $f:\mathbb{R}^{m\times r} \times \mathbb{R}^{n\times r}\rightarrow\mathbb{R}$ is defined by $f(X,Y) := \|XY^T-M\|_1$ \cite[Equation (4)]{gillis2018complexity} where $\|\cdot\|_1$ is the entrywise $\ell_1$-norm of a matrix and $M \in \mathbb{R}^{m\times n}$ is a data matrix. The goal is to decompose $M$ as a low rank matrix plus a sparse matrix. Figure \ref{fig:rpca_relative} reveals that the iterates of the subgradient method move away from a fixed spurious local minimum $(X^*,Y^*)\in \mathbb{R}^{m\times r} \times \mathbb{R}^{n\times r}$ despite being initialized nearby ($(m,n,r)=(62400,3417,10)$ in the experiment). Five trials are displayed, each corresponding to a uniform choice of constant step size in $[0.0000025,0.0000075]$ and a random initial point within $10^{-3}$ relative distance of the local minimum. Figure \ref{fig:rpca_lyapunov} shows the corresponding values of an associated Chetaev function $C:\mathbb{R}^{m\times r} \times \mathbb{R}^{n\times r}\rightarrow\mathbb{R}$ defined by $C(X,Y) := \|X^*\|_F^2-\|Y^*\|_F^2+\|Y\|_F^2-\|X\|_F^2$ where $\|\cdot\|_F$ is the Frobenius norm. The function increases as long as the iterates remain near the local minimum, but ceases to do so once the iterates are far enough. This is sufficient to prove instability (see Proposition \ref{prop:rpca_rankr_unstable}). Figure \ref{fig:rpca_fv} shows that the objective function values eventually drop below the spurious critical value and stabilize around a new value. 

\begin{figure}[ht]
    \centering
     \begin{subfigure}{.49\textwidth}
  \centering
  \includegraphics[width=.95\textwidth]{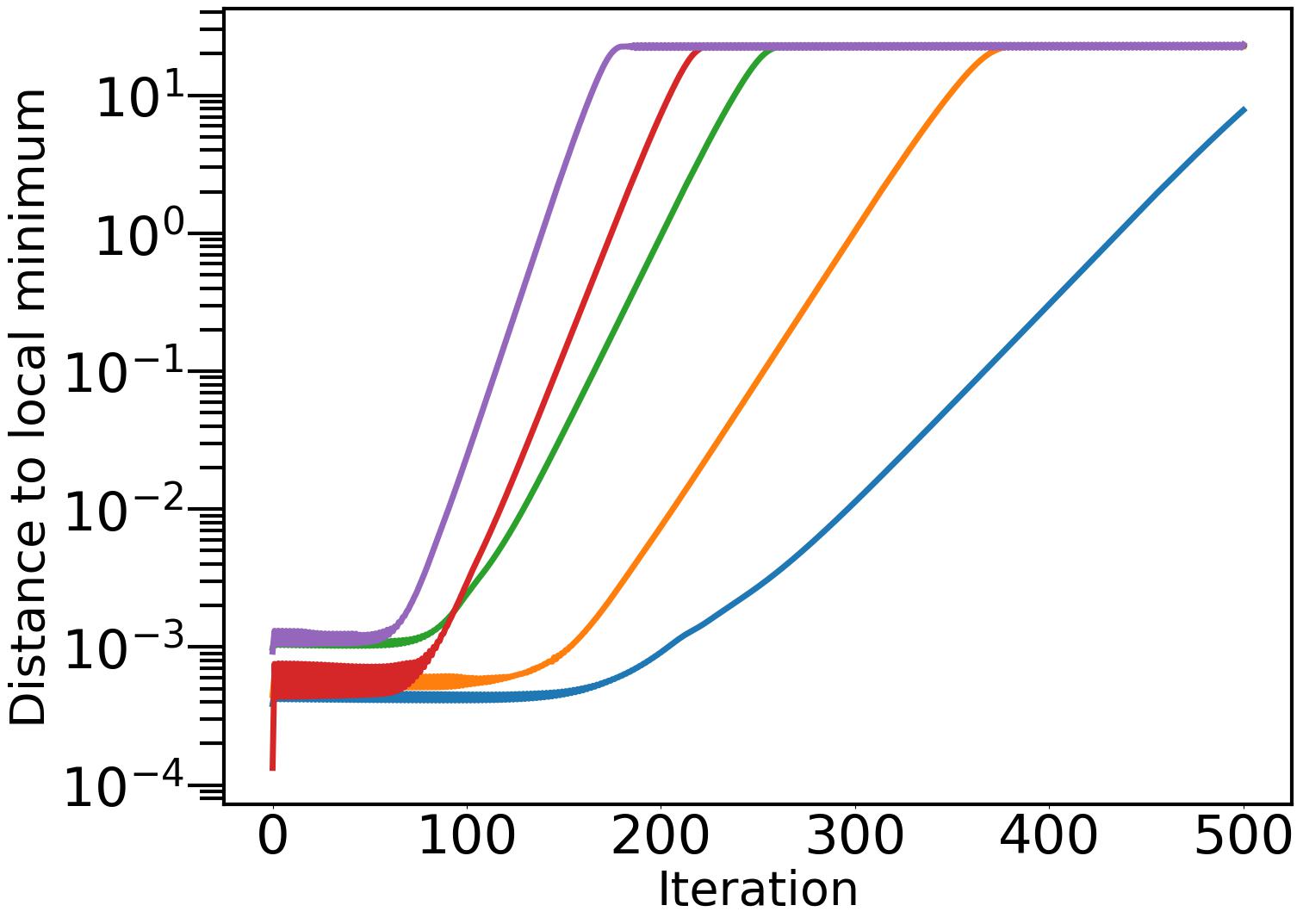}
  \caption{}
  \label{fig:rpca_relative}
\vspace*{1mm}
  \label{fig:video_distance}
 \end{subfigure}
 \begin{subfigure}{.49\textwidth}
  \centering
  \includegraphics[width=.95\textwidth]{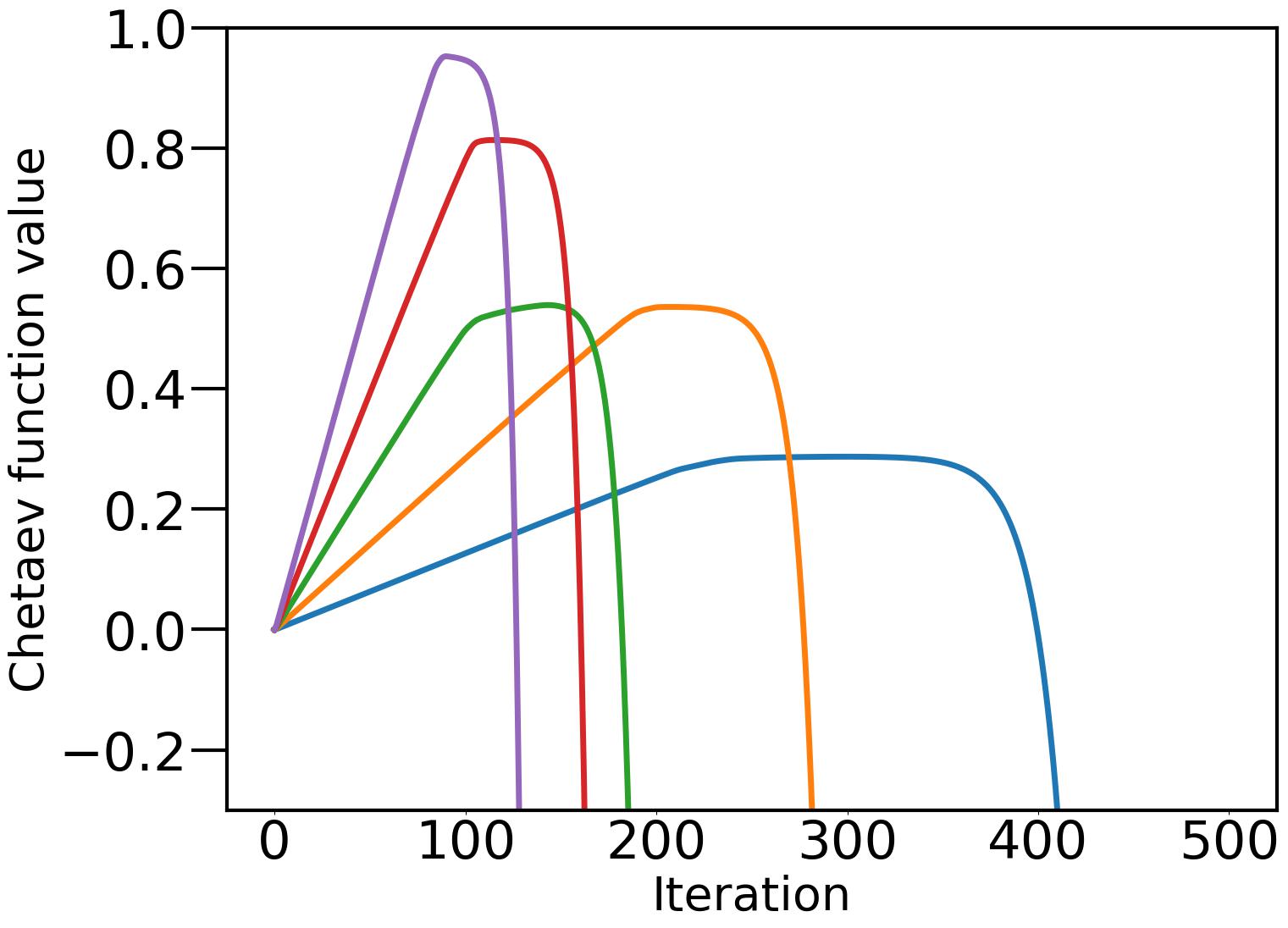}
  \caption{}
  \label{fig:rpca_lyapunov}
 \end{subfigure}
 \begin{subfigure}{.49\textwidth}
  \centering
  \includegraphics[width=.95\textwidth]{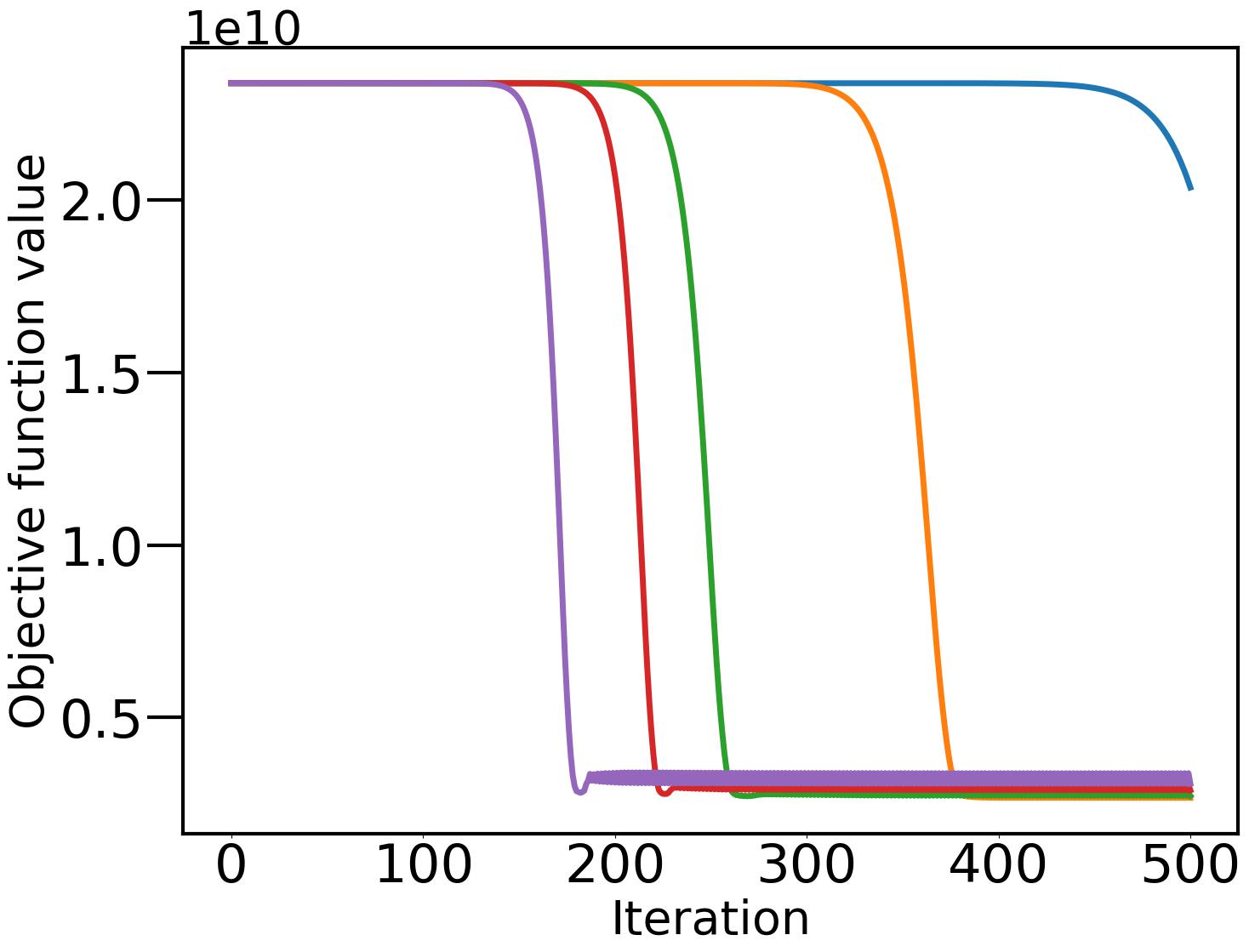}
  \caption{}
  \label{fig:rpca_fv}
 \end{subfigure}
    \caption{Subgradient method randomly initialized near a spurious local minimum of robust principal component analysis (5 trials with different step sizes).}
     \label{fig:rpca}
\end{figure}

The data used in Figure \ref{fig:rpca} is used in \cite[Figure 3]{netrapalli2014} to illustrate \textit{Non-convex Alternating Projections based Robust PCA} \cite[Algorithm 1]{netrapalli2014} and comes from the same dataset as the one used to illustrate \textit{Principal Component Pursuit} \cite[Equation (1.1)]{candes2011}. The application in those works consists of detecting moving objects in a surveillance video. Spurious local minima exist because the data matrix has zero rows, which corresponds to pixels that are composed of at most two of the three primary colors (red, green, and blue) throughout the video. It is crucial that the iterates of the subgradient method do not remain near a spurious local minimum like the one in Figure \ref{fig:rpca}. Otherwise, no moving object would be detected. In contrast, at the lower value obtained in Figure \ref{fig:rpca_fv}, all moving objects are detected. This can be seen in Figure \ref{fig:video_frames} and at the link \href{https://www.youtube.com/playlist?list=PLIR8kg8LvAmuPPZPD4CxjoqcYskoUzjzN}{[video]}.

 \begin{figure}[ht!]

 \centering
 \begin{subfigure}{.327\textwidth}
  \centering
  \includegraphics[width=1\textwidth]{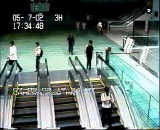}
  \caption{Original frame.}
  \label{fig:original_140}
 \end{subfigure}
 \begin{subfigure}{.327\textwidth}
  \centering
  \includegraphics[width=1\textwidth]{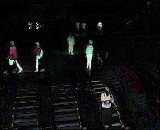}
  \caption{Moving objects.}
  \label{fig:s_140}
 \end{subfigure}
 \begin{subfigure}{.327\textwidth}
  \centering
  \includegraphics[width=1\textwidth]{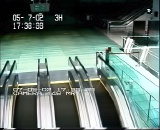}
  \caption{Background.}
  \label{fig:l_140}
 \end{subfigure}
 \vspace*{-1mm}
 \caption{Output after 500 iterations of the subgradient method with step size $\alpha = 0.000005$ when initialized within $10^{-3}$ of a spurious local minimum in relative distance.}
 \label{fig:video_frames}
 \vspace*{-3mm}
 \end{figure}
\begin{proposition}
\label{prop:rpca_rankr_unstable}
	The function $f$ defined from $\mathbb{R}^{m\times r} \times \mathbb{R}^{n\times r}$ to $\mathbb{R}$ by $f(X,Y):= \|XY^T - M\|_1$ admits strongly unstable spurious local minima if $M \in \mathbb{R}^{m\times n} \setminus \{0\}$ and $M$ contains at least $r$ zero rows or $r$ zero columns.
\end{proposition}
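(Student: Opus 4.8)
The plan is to reduce to the case of $r$ zero rows, exhibit an explicit spurious local minimum supported on them, and invoke Theorem~\ref{thm:suff_unstable} in the easy regime $\theta_1=0$ with the advertised Chetaev function, so that neither metric subregularity nor the Verdier condition is needed. Since $f(X,Y)=\|XY^T-M\|_1=\|YX^T-M^T\|_1$, the hypothesis is symmetric under transposition, so I may assume $M$ has $r$ zero rows, indexed by a set $R$ with $|R|=r$. I would define $x^*=(X^*,Y^*)$ by $Y^*=0$ and $X^*\in\mathbb{R}^{m\times r}$ with $X^*_{R,:}=I_r$ and all other rows zero, giving $f(x^*)=\|M\|_1$. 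That this is spurious is immediate: any nonzero entry $M_{i_0j_0}$ yields a rank-one competitor of value $\|M\|_1-|M_{i_0j_0}|<\|M\|_1$. That it is a (non-strict) local minimum I would check by splitting the objective: near $x^*$ the rows in $R$ contribute $\|X_{R,:}Y^T\|_1\geqslant\tfrac12\|Y\|_F$ (as $X_{R,:}$ stays invertible) while the rows outside $R$ contribute at least $\|M\|_1-C\|X-X^*\|_F\|Y\|_F$, so $f(X,Y)\geqslant\|M\|_1$ once $\|X-X^*\|_F$ is small.

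Next I would verify the hypotheses of Theorem~\ref{thm:suff_unstable}. Writing $f=h\circ\psi$ with $h(Z)=\|Z-M\|_1$ convex and $\psi(X,Y)=XY^T$ smooth, regularity gives $\partial f(X,Y)=\{(GY,\,G^TX):G\in\partial\|\cdot\|_1(XY^T-M)\}$, so the update reads $X_{k+1}=X_k-\alpha G_kY_k$ and $Y_{k+1}=Y_k-\alpha G_k^TX_k$. For the Chetaev function $C(X,Y):=\|X^*\|_F^2-\|Y^*\|_F^2+\|Y\|_F^2-\|X\|_F^2$ the first-order terms cancel because $\langle X_k,G_kY_k\rangle=\langle Y_k,G_k^TX_k\rangle$, leaving the exact identity $C(x_{k+1})-C(x_k)=\alpha^2(\|G_k^TX_k\|_F^2-\|G_kY_k\|_F^2)$.

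The crucial structural step is to show $S\cap U=\{Y=0\}$ on a small neighbourhood $U$ of $x^*$. The inclusion $\supseteq$ holds because at $(X,0)$ one can choose the free entries $G_{R,:}\in[-1,1]$ (recall $M_{R,:}=0$) to solve $G_{R,:}^TX_{R,:}=B$ with $B$ small; these are the only unconstrained entries needed to make $G^TX=0$. For $\subseteq$, a critical $(X,Y)$ near $x^*$ forces the rows in $R$ of the equation $GY=0$; matching signs gives $\langle G_{R,:},X_{R,:}Y^T\rangle=\|X_{R,:}Y^T\|_1$, while cyclicity of the trace gives $\langle G_{R,:},X_{R,:}Y^T\rangle=\operatorname{tr}\!\big(X_{R,:}(G_{R,:}Y)^T\big)=0$, whence $X_{R,:}Y^T=0$ and so $Y=0$ since $X_{R,:}$ is invertible. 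Thus $S\cap U$ is a piece of the affine subspace $\{Y=0\}$, a $C^2$ manifold of dimension $mr<mr+nr$.

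Finally I would run the $\theta_1=0$ branch. For $x_k\in U\setminus S$ one has $Y_k\neq0$, hence $(X_kY_k^T)_{R,:}=X_{R,:}Y_k^T\neq0$, giving $\|G_k^TX_k\|_F\geqslant\|(G_k)_{R,:}\|_F-\sqrt{mn}\,\epsilon\geqslant1-\sqrt{mn}\,\epsilon$ (a single nonzero entry already forces $\|(G_k)_{R,:}\|_F\geqslant1$), while $\|G_kY_k\|_F\leqslant\sqrt{mn}\,\epsilon$. Therefore $C(x_{k+1})-C(x_k)\geqslant\alpha^2(1-2\sqrt{mn}\,\epsilon)=:c_1>0$ on $B(x^*,\epsilon)$ whenever $\epsilon<1/(2\sqrt{mn})$, so Theorem~\ref{thm:suff_unstable} applies with $\theta_1=0$ and $x^*$ is strongly unstable. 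I expect the main obstacle to be the structural identification $S\cap U=\{Y=0\}$ — in particular ruling out nearby critical points with $Y\neq0$ — which is exactly where the trace/sign-matching identity does the work; notice it uses only invertibility of $X_{R,:}$, so no genericity on $M$ beyond the zero-row hypothesis is required, and everything else reduces to direct norm estimates.
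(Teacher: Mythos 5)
Your proposal is correct and follows the same overall strategy as the paper: the same family of minima (invertible $r\times r$ block in $X^*$, all other rows of $X^*$ and all of $Y^*$ zero), the same rank-one competitor for spuriousness, the same Chetaev function, the same exact second-order identity $C(x_{k+1})-C(x_k)=\alpha^2\bigl(\|G_k^TX_k\|_F^2-\|G_kY_k\|_F^2\bigr)$, and the same $\theta_1=0$ branch of Theorem~\ref{thm:suff_unstable}. Where you genuinely deviate is in the two supporting steps. First, to identify $S\cap U=\{Y=0\}$ the paper invokes the definable Morse--Sard theorem (critical points near $x^*$ lie on the level set $\{f=f(x^*)\}$, which equals $\{Y=0\}$ by the inequality \eqref{dual_norm}), whereas you compute the critical set directly from the convex-composite chain rule $\partial f(X,Y)=\{(GY,\,G^TX):G\in\partial\|\cdot\|_1(XY^T-M)\}$ together with the sign-matching/trace identity $\|X_{R,:}Y^T\|_1=\langle G_{R,:},X_{R,:}Y^T\rangle=\operatorname{tr}\bigl(X_{R,:}(G_{R,:}Y)^T\bigr)=0$; this is more elementary and self-contained, at the mild price of needing the chain rule as an \emph{equality} (not just an inclusion) for the direction $\{Y=0\}\subseteq S$ --- which does hold here since the outer function is finite convex and the inner map is smooth \cite[Theorem 10.6]{rockafellar2009variational}, or can be bypassed by noting that each $(X,0)\in U$ is itself a local minimum by your own estimate, hence critical. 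Second, for $c_1>0$ the paper runs a compactness/minimizing-sequence contradiction, whereas you give a direct quantitative bound starting from $\|(G_k)_{R,:}\|_F\geqslant 1$; both rest on the identical key observation that $(X_k)_{R,:}Y_k^T\neq 0$ forces a unit-modulus entry in the sign matrix, but your version produces an explicit $c_1$. One small slip in constants: $\|(G_k)_{R,:}^T(X_k)_{R,:}\|_F\geqslant\sigma_{\min}\bigl((X_k)_{R,:}\bigr)\|(G_k)_{R,:}\|_F\geqslant(1-\epsilon)\|(G_k)_{R,:}\|_F$, so your lower bound should read $\|G_k^TX_k\|_F\geqslant 1-\epsilon\sqrt{rn}-\epsilon\sqrt{mn}$ rather than $1-\sqrt{mn}\,\epsilon$; this only changes the threshold on $\epsilon$ and is harmless. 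Net comparison: your route avoids the Morse--Sard theorem entirely and is fully quantitative, while the paper's softer argument is shorter and never needs the exact form of $\partial f$ beyond the inclusion used to write the update.
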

\begin{proof}
Without loss of generality, we assume that the first $r$ rows of $M$ are equal to zero. Let $\tilde{M}$ be the matrix containing the $m-r$ remaining rows, one of which is non-zero. We seek to show that $(X^*,Y^*) \in \mathbb{R}^{m\times r} \times \mathbb{R}^{n \times r}$ is a strongly unstable spurious local minimum of $f(X,Y):=\|XY^T-M\|_1$, where the first $r$ rows of $X^*$ form an invertible matrix $\hat{X}^* \in \mathbb{R}^{r \times r}$, the remaining rows are zero, and $Y^* = 0$. Given $(H,K) \in \mathbb{R}^{m\times r} \times \mathbb{R}^{n \times r}$, let $\hat{H}$ be the first $r$ rows of $H$ and let $\tilde{H}$ be the remaining $m-r$ rows. For all $(H,K)$ sufficiently small, we have
\begin{subequations}
\label{eq:hk}
    \begin{align}
        f(X^*+H,Y^*+K) & = \| (X^*+H)K^T - M \|_1  \\
         & =   \|(\hat{X}^* + \hat{H})K^T\|_1 + \|\tilde{H}K^T - \tilde{M}\|_1 \label{split} \\
         & \geqslant  \|\hat{X}^*K^T\|_1 - \|\hat{H}K^T\|_1 + \|\tilde{M}\|_1 - \|\tilde{H}K^T\|_1 \label{triangle}\\
         & = \|\hat{X}^*K^T\|_1 ~-~ \|HK^T\|_1 ~+~ \|M\|_1 \label{group}\\
         & \geqslant c\|K\|_1 ~-~ \|HK^T\|_1 ~+~ \|M\|_1 \label{equivalence_norms} \\
         & \geqslant c/2\|K\|_1 ~+~ \|M\|_1 \label{dual_norm} \\
         & \geqslant \|M\|_1 = f(X^*,Y^*) > f(\bar{X},\bar{Y}). \label{H_small}
    \end{align}
\end{subequations}
Above, the first term in \eqref{split} is the $\ell_1$-norm of the first $r$ rows of $(X^*+H)K^T - M$, while the second term in \eqref{split} is the $\ell_1$-norm of the remaining rows. \eqref{triangle} follows from the triangular inequality. \eqref{group} holds because the first $r$ rows of $HK^T$ are $\hat{H}K^T$, while the remaining rows are $\tilde{H}K^T$. The existence of a positive constant $c$ in \eqref{equivalence_norms} is due to the equivalence of norms ($K\mapsto \|\hat{X}^*K^T\|_1$ is a norm because $\hat{X}^*$ is invertible). \eqref{dual_norm} holds because we may take $\|H\|_\infty \leqslant c/(2m)$ where $\|\cdot\|_\infty$ is the dual norm of $\|\cdot\|_1$. Then $\|HK^T\|_1 = \sum_{i=1}^m \sum_{j=1}^n | \langle h_i , k_j \rangle | \leqslant \sum_{i=1}^m \sum_{j=1}^n \|h_i\|_\infty \|k_j\|_1 \leqslant \sum_{i=1}^m \sum_{j=1}^n \|H\|_\infty \|k_j\|_1 \leqslant c/2 \sum_{j=1}^n \|k_j\|_1 = c/2 \|K\|_1$ where $h_i^T$ and $k_j^T$ respectively denote the rows of $H$ and $K$. Finally, we may choose $(\bar{X},\bar{Y})$ in \eqref{H_small} to be factors of a rank-one matrix $\bar{M} \in \mathbb{R}^{m\times n}$ which has all zero entries, apart from one where $\bar{M}_{ij} = M_{ij} \neq 0$. Then $f(\bar{X},\bar{Y}) = \|\bar{X}\bar{Y}^T-M\|_1 = \|\bar{M}-M\|_1 = \|M\|_1 - |M_{ij}| < \|M\|_1$.

We next show that $(X^*,Y^*)$ is strongly unstable using Theorem \ref{thm:suff_unstable}. The function $f$ is locally Lipschitz and semi-algebraic. Let $S$ denote the set of critical points of $f$. By the definable Morse-Sard theorem \cite[Corollary 9]{bolte2007clarke}, there exists a bounded neighborhood $U$ of the local minimum $(X^*,Y^*)$ such that $S \cap U = \{(X,Y) \in U: f(X,Y) = f(X^*,Y^*)\} = \{(X,Y) \in U:Y = Y^*\}$, where the second setwise equality is due to \eqref{dual_norm}. As a result, $S$ is a $C^2$ manifold at $(X^*,Y^*)$. Let $\theta_1 := 0$ and $C:\mathbb{R}^{m \times r}\times \mathbb{R}^{n \times r}\rightarrow \mathbb{R}$ be the continuous function defined by $C(X,Y):= \|X^*\|_F^2-\|Y^*\|_F^2+\|Y\|_F^2-\|X\|_F^2$. Let $\alpha>0$ and consider a sequence $(X_k,Y_k)_{k \in \mathbb{N}}$ generated by the subgradient method with constant step size $\alpha$ such that $(X_k,Y_k) \in U\setminus S$ for all $k\in \mathbb{N}$. Let $\mathrm{sign}(\cdot)$ be the function defined by $\mathrm{sign}(t) = 1$ if $t>0$, $\mathrm{sign}(t) = -1$ if $t<0$, and $\mathrm{sign}(t) = [-1,1]$ if $t=0$. When the input is a matrix, it is applied entrywise. Letting 
\begin{equation*}
    c_1 :=\alpha^2 \inf \{ \|\Lambda^T X\|_F^2 - \|\Lambda Y\|_F^2 : (X,Y) \in U\setminus S, ~ \Lambda \in \mathrm{sign}(X Y^T - M) \},
\end{equation*}
we have $C(X_{k+1},Y_{k+1}) - C(X_k,Y_k) = \hdots$
\begin{subequations}
\begin{align}
    & = \|Y_{k+1}\|_F^2 - \|X_{k+1}\|_F^2 - \|Y_{k}\|_F^2 + \|X_{k}\|_F^2 \label{eq:chpca_b}\\
    & = \mathrm{trace}(Y_{k+1}^T Y_{k+1} - X_{k+1}^T X_{k+1} - Y_k^T Y_k + X_k^T X_k) \label{eq:chpca_c}\\
    & = \alpha^2 \mathrm{trace}(X_k^T \Lambda_k \Lambda_k^T X_k - Y_k^T \Lambda_k^T\Lambda_k Y_k) \\
    & = \alpha^2 (\|\Lambda_k^T X\|_F^2 - \|\Lambda_k Y_k\|_F^2)  \geqslant c_1 d((X_k,Y_k),S)^{\theta_1} \label{eq:chpca_d}
\end{align}
\end{subequations}
where $\Lambda_k \in \mathrm{sign}(X_k Y_k^T - M)$. It remains to show that $c_1>0$. Given $(\Lambda,X,M) \in \mathbb{R}^{m\times n} \times \mathbb{R}^{m\times r}\times \mathbb{R}^{m\times n}$, let $(\hat{\Lambda},\hat{X},\hat{M})$ be the first $r$ rows of $(\Lambda,X,M)$ and let $(\tilde{\Lambda},\tilde{X},\tilde{M})$ be the remaining $m-r$ rows. It suffices to show that
\begin{equation*}
\label{eq:lambdah}
    \inf \{ \|\hat{\Lambda}^T \hat{X}\|_F : (X,Y) \in U \setminus S,~\hat{\Lambda} \in \mathrm{sign}(\hat{X}Y^T - \hat{M}) \} > 0
\end{equation*}
after possibly reducing the neighborhood $U$ of $(X^*,Y^*)$. Indeed, for all $(X,Y) \in U \setminus S$ and $\Lambda \in \mathrm{sign}(X Y^T - M) $, we then have $\|\Lambda^T X\|_F^2 - \|\Lambda Y\|_F^2 = \|\hat{\Lambda}^T \hat{X} + \tilde{\Lambda}^T \tilde{X}\|_F^2 - \|\Lambda Y\|_F^2 \geqslant (\|\hat{\Lambda}^T \hat{X}\|_F - \|\tilde{\Lambda}^T \tilde{X}\|_F)^2 - \|\Lambda Y\|_F^2 \geqslant \|\hat{\Lambda}^T \hat{X}\|_F^2/2$ since $\tilde{X}^* = Y^* = 0$. We next reason by contradiction and assume that the infimum in \eqref{eq:lambdah} is equal to zero.
Let $(X_i,Y_i,\Lambda_i)_{i \in \mathbb{N}}$ be a minimizing sequence. Since it is contained in the bounded set $U \times [-1,1]^{m \times n}$, there exists a subsequence (again denoted $(X_i,Y_i,\Lambda_i)_{i \in \mathbb{N}}$) that converges to some $(X^\circ,Y^\circ,\Lambda^\circ)$. Naturally we have $(\hat{\Lambda}^\circ)^T \hat{X}^\circ = 0$. On the one hand, since $\hat{X}^* \in \mathbb{R}^{r\times r}$ is invertible, so is any matrix in its neighborhood $\bar{U}$, in particular $\hat{X}^\circ,\hat{X}_0,\hat{X}_1,\hdots$ after possibly reducing $U$. Hence $\hat{\Lambda}^\circ = 0$. On the other hand, since $(X_i,Y_i) \in U \setminus S$, $S \cap U = \{ (X,Y)\in U : Y=Y^*=0\}$, and $\hat{M} = 0$, we have $Y_i \neq 0$ and $\hat{X}_iY_i^T-\hat{M} \neq 0$ for all $i \in \mathbb{N}$. Hence the matrix $\hat{\Lambda}_i \in \mathrm{sign}(\hat{X}Y^T - \hat{M})$ has at least one entry equal to either $1$ or $-1$. Thus $\|\hat{\Lambda}_i\|_\infty \geqslant 1$ for all $i \in \mathbb{N}$. Passing to the limit, we obtain the contradiction $0 =\|\hat{\Lambda}^\circ\|_\infty \geqslant 1$.
\end{proof}
\section*{Acknowledgments}
We thank the reviewers and the associate editor for their valuable feedback.
\bibliographystyle{abbrv}    
\bibliography{mybib}
\end{document}